\newtheorem{thm}{Theorem}[section]
\newtheorem{fact}[thm]{Fact}
\newtheorem{prop}[thm]{Proposition}
\newtheorem{cor}[thm]{Corollary}
\newtheorem{claim}{Claim}[thm]
\theoremstyle{definition}
\newtheorem{defn}[thm]{Definition}
\newtheorem{q}{Problem}
\DeclareMathOperator{\U}{U}
\DeclareMathOperator{\cf}{cf}
\DeclareMathOperator{\reg}{Reg}
\DeclareMathOperator{\ads}{ADS}
\DeclareMathOperator{\refl}{Refl}
\DeclareMathOperator{\nacc}{nacc}
\DeclareMathOperator{\acc}{acc}
\DeclareMathOperator{\tp}{TP}
\DeclareMathOperator{\otp}{otp}
\DeclareMathOperator{\fs}{FS}
\newcommand\s{\subseteq}
\newcommand\sq{\sqsubseteq}
\newcommand\stick{{{\ensuremath \mspace{2mu}\mid\mspace{-12mu} {\raise0.6em\hbox{$\bullet$}}}}}
\newcommand*\axiomfont[1]{\textsf{\textup{#1}}}
\newcommand\zfc{\axiomfont{ZFC}}
\newcommand\gch{\axiomfont{GCH}}
\newcommand\sch{\axiomfont{SCH}}
\renewcommand{\mid}{\mathrel{|}\allowbreak}
\renewcommand{\restriction}{\mathbin\upharpoonright}
\author {Assaf Rinot}
\address{Department of Mathematics, Bar-Ilan University, Ramat-Gan 5290002, Israel.}
\urladdr{http://www.assafrinot.com}
\title[Jónsson successor of a singular]{May the successor of a\\singular cardinal be Jónsson?}
\begin{document}
\begin{abstract} In this survey, we collect necessary conditions for the successor of a singular cardinal to be J{\'o}nsson.
\end{abstract}
\date{A preprint as of 13-Dec-2023. For the latest version, visit \textsf{http://p.assafrinot.com/s02}.}

\maketitle

\section{Introduction}

Our starting point is Ramsey's theorem \cite{onaproblemofformallogic}
asserting that for every 2-coloring of the unordered pairs of an infinite set $X$,
$c:[X]^2 \rightarrow 2$, there exists an infinite subset $Y\s X$
that is \emph{$c$-homogeneous} meaning that $c$ is constant over $[Y]^2$.
Soon after, Sierpi{\'n}ski \cite{MR1556708} demonstrated that Ramsey's theorem does not generalize to the uncountable
by constructing a coloring $c:[\mathbb R]^2\rightarrow2$ satisfying that every uncountable $Y\s\mathbb R$
is \emph{$c$-omnichromatic}, meaning that $c\restriction [Y]^2$ is a surjection.

Instead of interpreting Sierpi{\'n}ski's result as a disappointment, set theorists
view witnesses to the failure of generalized Ramsey theorem as \emph{strong colorings} and set their goal to make them as strong as possible.
A succinct way to state results of this form is using the following \emph{arrow notation}:
\begin{defn}[{\cite[\S18]{MR202613}}] \label{hungarian} For cardinals $\kappa,\theta,\lambda$ and a positive integer $n$:
\begin{itemize}
\item $\kappa\nrightarrow[\lambda]^n_\theta$ asserts the existence of a coloring $c:[\kappa]^2\rightarrow\theta$ such that $c\restriction [Y]^n$ is surjective for every $Y\s\kappa$ of size $\lambda$;
\item $\kappa\nrightarrow[\lambda]^{<\omega}_\theta$ asserts the existence of a coloring $c:[\kappa]^{<\omega}\rightarrow\theta$ such that $c\restriction [Y]^{<\omega}$ is surjective for every $Y\s\kappa$ of size $\lambda$.
\end{itemize}
\end{defn}

An infinite cardinal $\kappa$ is \emph{J{\'o}nsson} iff $\kappa\rightarrow[\kappa]^{<\omega}_\kappa$ holds, i.e., $\kappa\nrightarrow[\kappa]^{<\omega}_\kappa$ fails.
Rowbottom \cite{MR323572} proved that every measurable cardinal is a (regular) J{\'o}nsson cardinal,
and Prikry \cite{MR262075} proved that a measurable cardinal may be turned into a singular J{\'o}nsson cardinal via a cardinal-preserving forcing.
Tryba \cite{MR788256} and independently Woodin proved that if $\kappa$ is a J{\'o}nsson cardinal then every stationary subset of $\kappa$ reflects.
This was then improved by Todor{\v{c}}evi{\'c} \cite{TodActa} who derived the same conclusion from $\kappa\rightarrow[\kappa]^2_\kappa$.
In particular, the successor of a regular cardinal is never J{\'o}nsson.
This survey centers around the open problem of whether the successor of a singular cardinal may consistently be J{\'o}nsson.

\subsection{Notation and conventions}\label{conventions}
Throughout this paper, $\kappa$ denotes a regular uncountable cardinal,
$\lambda$ and $\mu$ denote infinite cardinals, and $\theta$ a (possibly finite) cardinal.
Let $E^\kappa_\mu:=\{\alpha < \kappa \mid \cf(\alpha) = \mu\}$,
and define $E^\kappa_{\le \mu}$, $E^\kappa_{<\mu}$, $E^\kappa_{\ge \mu}$, $E^\kappa_{>\mu}$,  $E^\kappa_{\neq\mu}$ analogously.
$\reg(\kappa)$ stands for the set of all infinite regular cardinals below $\kappa$.
We identify $[\kappa]^2$ with $\{ (\alpha,\beta)\mid \alpha<\beta<\kappa\}$.
For a set of ordinals $A$, we write $\acc(A) := \{\alpha\in A \mid \sup(A \cap \alpha) = \alpha > 0\}$
and $\nacc(A) := A \setminus \acc(A)$.

\section{Cardinal arithmetic}

\begin{thm}[{Erd{\H{o}}s-Hajnal-Rado, \cite{MR202613}}]\label{thm21} Suppose that $\lambda$ is an infinite cardinal satisfying that $2^\lambda=\lambda^+$.
Then $\lambda^+\nrightarrow[\lambda^+]^2_{\lambda^+}$ holds.
\end{thm}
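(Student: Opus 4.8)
The goal is to construct a single coloring $c\colon[\lambda^+]^2\to\lambda^+$ witnessing $\lambda^+\nrightarrow[\lambda^+]^2_{\lambda^+}$; that is, one for which $c\restriction[Y]^2$ is surjective onto $\lambda^+$ for every $Y\s\lambda^+$ with $|Y|=\lambda^+$. The hypothesis $2^\lambda=\lambda^+$ enters at the very start: it lets me fix a bijection $\langle\rho_\alpha\mid\alpha<\lambda^+\rangle$ between $\lambda^+$ and ${}^\lambda 2$, thereby identifying each ordinal $\alpha<\lambda^+$ with a branch through the tree ${}^{<\lambda}2$. Since $2^\lambda=\lambda^+$ also yields $(\lambda^+)^\lambda=\lambda^+$, I may simultaneously fix an enumeration $\langle f_\alpha\mid\alpha<\lambda^+\rangle$ of ${}^\lambda(\lambda^+)$ in which every function $\lambda\to\lambda^+$ occurs cofinally often, and, for each $\beta<\lambda^+$, an injection $e_\beta\colon\beta\to\lambda$ (available as $|\beta|\le\lambda$).

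The coloring I would try is a decorated Sierpi\'nski coloring: for $\alpha<\beta$ let $\Delta(\alpha,\beta):=\min\{i<\lambda\mid\rho_\alpha(i)\ne\rho_\beta(i)\}$, which is well defined because $\rho_\alpha\ne\rho_\beta$, and set
\[
c(\alpha,\beta):=f_\beta\bigl(e_\beta(\alpha)\bigr)\in\lambda^+ .
\]
The role of $\Delta$ and the tree structure is to guarantee \emph{spread}: in any $Y$ of size $\lambda^+$ the branches $\{\rho_\alpha\mid\alpha\in Y\}$ split at cofinally many levels of ${}^{<\lambda}2$, so that for each target one has abundantly many candidate pairs. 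The design principle is that $\beta$ supplies a \emph{decoder} $f_\beta$ while $e_\beta(\alpha)$ selects the coordinate to be read, so that in principle every $\gamma<\lambda^+$ can arise as a value.

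The main obstacle is precisely surjectivity onto all of $\lambda^+$ on every large $Y$: each decoder $f_\beta$ has range of size at most $\lambda$, so no single $\beta$ can realize more than $\lambda$ colors, and since $Y$ is adversarial one cannot know in advance to which coordinates $e_\beta$ sends $Y\cap\beta$. The plan to overcome this is a diagonalization rather than a bare formula: fixing $Y$ and $\gamma$, I would pass to the $\lambda^+$ ordinals $\beta\in Y$ for which $Y\cap\beta$ already has order type $\lambda$, and argue that $\bigcup_{\beta\in Y}f_\beta\bigl[e_\beta[Y\cap\beta]\bigr]$ exhausts $\lambda^+$. Making this rigorous is where $2^\lambda=\lambda^+$ does its real work: although there are $2^{\lambda^+}$ potential sets $Y$, the local data attached to each candidate $\beta$ ranges over a set of size only $\lambda^+$, so one can arrange the enumerations $\langle f_\alpha\mid\alpha<\lambda^+\rangle$ and $\langle e_\beta\mid\beta<\lambda^+\rangle$ (or else build $c$ directly by recursion on $\beta<\lambda^+$) so as to defeat all $\lambda^+$ canonical obstructions at once. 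I expect this reduction --- from ``all size-$\lambda^+$ subsets'' to ``$\lambda^+$ many canonical requirements,'' powered by the cardinal arithmetic hypothesis --- to be the crux, with the tree and first-difference analysis supplying the genericity needed to meet each requirement.
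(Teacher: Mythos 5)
Your high-level strategy is the right one --- use $2^\lambda=\lambda^+$ to replace the $2^{\lambda^+}$-many adversarial sets $Y$ by only $\lambda^+$-many canonical requirements that can be anticipated in advance --- but the proposal stops exactly where the real work begins, and the concrete coloring you write down does not survive scrutiny. With $c(\alpha,\beta):=f_\beta(e_\beta(\alpha))$ and the sequences $\langle f_\beta\rangle$, $\langle e_\beta\rangle$ fixed in advance, the colors realized at a vertex $\beta$ on a set $Y$ are $f_\beta[e_\beta[Y\cap\beta]]$, and whether a prescribed $\tau$ lands in this set depends on how $e_\beta$ happens to scatter $Y\cap\beta$ inside $\lambda$ --- something the enumeration $\langle f_\beta\rangle$ cannot be coordinated with, since $Y$ may meet each fiber of $\beta\mapsto f_\beta$ in at most one point (there are $\lambda^+$ fibers and $|Y|=\lambda^+$). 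The $\Delta$-function and the tree ${}^{<\lambda}2$ are never used in your definition of $c$, and the ``spread'' they provide plays no role in the argument; they are a red herring here.

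The two missing ingredients, which constitute the actual proof, are: (i) the correct identification of the canonical requirements, namely an enumeration $\langle a_\alpha\mid\alpha<\lambda^+\rangle$ of all \emph{bounded} subsets of $\lambda^+$ (this is where $2^\lambda=\lambda^+$ is used), so that any large $Y$ satisfies $Y\cap\epsilon=a_\alpha$ for some $\epsilon$ with $|Y\cap\epsilon|=\lambda$ and some $\alpha$; and (ii) a \emph{disjoint refinement} lemma: for each $\beta<\lambda^+$ one finds pairwise disjoint sets $b\in[\beta]^\lambda$, one inside each $a_\alpha\in[\beta]^\lambda$ with $\alpha<\beta$ (built by a straightforward recursion of length $\lambda$ using a bijection $\lambda\leftrightarrow\lambda\times A_\beta$). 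The disjointness is what lets a \emph{single} $\beta$ simultaneously serve all $\le\lambda$ relevant requirements: on each refined set $b$ one freely assigns $c[b\times\{\beta\}]=\beta$ with no conflicts, because the $b$'s do not overlap and each has size $\lambda\ge|\beta|$. Then, given $Y$ and $\tau$, one picks $\alpha$ with $a_\alpha=Y\cap\epsilon$ of size $\lambda$ and any $\beta\in Y$ above $\max\{\epsilon,\alpha,\tau\}$, and the refined $b\subseteq a_\alpha\subseteq Y$ yields $\tau\in\beta=c[b\times\{\beta\}]\subseteq c``[Y]^2$. Your phrase ``defeat all $\lambda^+$ canonical obstructions at once'' is precisely the content of (ii), but asserting that it can be arranged is not the same as exhibiting the mechanism; without the disjoint refinement the proof is not there.
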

\begin{proof} Using $2^\lambda=\lambda^+$, let $\langle a_\alpha\mid \alpha<\lambda^+\rangle$ enumerate all bounded subsets of $\lambda^+$.
We shall need the following standard claim concerning disjoint refinements.
\begin{claim} Let $\beta<\lambda^+$. There exists a pairwise disjoint subfamily $\mathcal B_\beta\s[\beta]^\lambda$
satisfying that for every $\alpha<\beta$ such that $a_\alpha\in[\beta]^\lambda$, there is $b\in\mathcal B_\beta$ with $b\s a_\alpha$.
\end{claim}
\begin{proof} To avoid trivialities, suppose that $A_\beta:=\{\alpha<\beta\mid a_\alpha\in[\beta]^\lambda\}$ is nonempty,
so that $0<|A_\beta|\le\lambda$.
Fix a bijection $\pi:\lambda\leftrightarrow\lambda\times\mathcal A_\beta$ and then recursively construct an injection $f:\lambda\rightarrow\beta$ satisfying that for every $i<\lambda$:
$$(\pi(i)=(j,\alpha))\implies(f(i)\in a_\alpha).$$
For every $\alpha\in A_\beta$, the set $b_\alpha:=\{ f(i)\mid \exists j\,(\pi(i)=(j,\alpha))\}$
is a $\lambda$-sized subset of $a_\alpha$, and since $f$ is injective, $\mathcal B_\beta:=\{ b_\alpha\mid \alpha\in A_\beta\}$ is a pairwise disjoint family.
\end{proof}

Let $\langle \mathcal B_\beta\mid\beta<\lambda^+\rangle$ be given by the preceding claim.
Pick a coloring $c:[\lambda^+]^2\rightarrow\lambda^+$ satisfying that for every $\beta<\lambda^+$ for every $b\in\mathcal B_\beta$, $c[b\times\{\beta\}]=\beta$.
To see this works, let $Y$ be a subset of $\lambda^+$ of full size, and let $\tau<\lambda^+$ be any prescribed color,
and we shall show that $\tau\in c``[Y]^2$.

As $|Y|=\lambda^+$, we may find a large enough $\epsilon<\lambda^+$ such that $|Y\cap\epsilon|=\lambda$ and then find an $\alpha<\lambda^+$ such that $Y\cap\epsilon=a_\alpha$.
Pick $\beta\in Y$ above $\max\{\epsilon,\alpha,\tau\}$.
Evidently, $a_\alpha\in[\beta]^\lambda$, so we may pick $b\in\mathcal B_\beta$ such that $b\s a_\alpha$.
Altogether, $b\s Y$ and $\tau\in\beta=c[b\times\{\beta\}]\s c``[Y]^2$.
\end{proof}

A few remarks are in order here:
\begin{enumerate}
\item The proof of Theorem~\ref{thm21} is a template of a proof of anti-Ramsey consequences of the $\gch$. See, e.g., \cite[Theorem~9]{MR0505558} and \cite[Theorem~2]{MR4143159}
for elaborations of this argument.
\item It is not hard to see that Theorem~\ref{thm21} (and its successors) remain valid after weakening the hypothesis of $2^\lambda=\lambda^+$ to the strick principle $\stick(\lambda^+)$.
In the special case of $\lambda$ regular, stronger consequences may be derived --- see \cite[Theorem~6.3]{paper45}.
\item In \cite[pp.~290--291]{TodActa}, Todor{\v{c}}evi{\'c}   shows that the special case of $\lambda=\aleph_0$ of Theorem~\ref{thm21}
follows from a classical theorem of Sierpi\'nski \cite{sierpinski1934hypothese}
asserting that the continuum hypothesis gives rise to a sequence $\langle f_n\mid n<\omega\rangle$ of functions from $\aleph_1$ to $\aleph_1$
such that for every uncountable $Y\s\aleph_1$, for all but finitely many $n<\omega$, $f_n[Y]=\aleph_1$.
This finding was extended in \cite{paper55}.
\item A simple stretching argument shows that $\lambda^+\nrightarrow[\lambda^+]^2_\theta$ with $\theta=\lambda^+$
follows from $\lambda^+\nrightarrow[\lambda^+]^2_\theta$ with merely $\theta=\lambda$.
This was extended in \cite[\S3]{paper50}.
\end{enumerate}

The upcoming Fact~\ref{fact23} reduces the arithmetic hypothesis of Theorem~\ref{thm21} to a postulation coming from {\it pcf} theory, as follows.
First, given an infinite cardinal $\lambda$ and a subset $\Theta\s\reg(\lambda)$,
define an ordering $<^*$ over $\prod\Theta$ by letting $f<^*g$ iff $\sup\{ \theta\in\Theta\mid f(\theta)\ge g(\theta)\}<\sup(\Theta)$.
Then, let
\begin{itemize}
\item $\mathfrak b(\Theta)$ denote the least size of unbounded family in $(\prod\Theta,<^*)$, and let
\item $\mathfrak d(\Theta)$ denote the least size of a cofinal family in $(\prod\Theta,<^*)$.
\end{itemize}

\begin{fact}[Shelah, Theorem~1.5 and Claim~2.1 of \cite{Sh:355}]\label{fact22} Suppose that $\lambda$ is a singular cardinal. Then:
\begin{enumerate}
\item There exists a cofinal subset $\Theta\s\reg(\lambda)$ such that $\mathfrak d(\Theta)=\lambda^+$ (and hence also $\mathfrak b(\Theta)=\lambda^+$);
\item If $\cf(\lambda)>\omega$, then the above $\Theta$ may be taken to be $\{\mu^+\mid \mu\in C\}$
for some sparse enough club $C$ in $\lambda$.
\end{enumerate}
\end{fact}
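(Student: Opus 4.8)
The ordering $<^*$ is precisely the reduced-product ordering on $\prod\Theta$ modulo the ideal $J^{\mathrm{bd}}$ of subsets of $\Theta$ that are bounded below $\sup(\Theta)=\lambda$, so $\mathfrak b(\Theta)$ and $\mathfrak d(\Theta)$ are the bounding and dominating numbers of $(\prod\Theta,<^*)$. Since $\mathfrak b(\Theta)\le\mathfrak d(\Theta)$ always holds, the plan is to sandwich $\mathfrak d(\Theta)$ between $\lambda^+$ from below and $\lambda^+$ from above, choosing $\Theta$ so that both bounds apply to the same set. The lower bound will be elementary, whereas the upper bound is where Shelah's pcf machinery enters.

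For the lower bound I would first arrange $\Theta$ to be \emph{sparse}: take $\Theta=\{\theta_i\mid i<\cf(\lambda)\}$ cofinal in $\lambda$ with $\otp(\Theta)=\cf(\lambda)$, $\cf(\lambda)<\min(\Theta)$, and growing fast enough that $\theta_j>\sum_{i<j}\theta_i$ for every $j$. Given any family $\{f_\alpha\mid\alpha<\lambda\}\s\prod\Theta$, partition it as $\bigcup_{i<\cf(\lambda)}F_i$ with $|F_i|\le\theta_i$ (possible since $\sum_i\theta_i=\lambda$), and define $g\in\prod\Theta$ by $g(\theta_j):=\sup\{f(\theta_j)+1\mid f\in\bigcup_{i<j}F_i\}$. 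Sparseness guarantees that fewer than $\theta_j$ ordinals below the regular cardinal $\theta_j$ are collected, so $g(\theta_j)<\theta_j$; and if $f\in F_i$ then $f(\theta_j)<g(\theta_j)$ for all $j>i$, whence $f<^*g$. Thus every $\lambda$-sized subfamily is $<^*$-bounded, giving $\mathfrak b(\Theta)\ge\lambda^+$ and therefore $\mathfrak d(\Theta)\ge\lambda^+$.

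The upper bound $\mathfrak d(\Theta)\le\lambda^+$ is the substantial part, and here I would invoke Shelah's fundamental scale-existence theorem: for every singular $\lambda$ there is a cofinal $A\s\reg(\lambda)$ with $|A|=\cf(\lambda)$ carrying an exact scale, i.e.\ $\operatorname{tcf}(\prod A,<^*)=\lambda^+$. A witnessing $<^*$-increasing cofinal sequence of length $\lambda^+$ is in particular a cofinal family of size $\lambda^+$, so $\mathfrak d(A)\le\lambda^+$; moreover, restricting the scale to a cofinal subsequence lets us take $A$ sparse without loss. Applying the previous paragraph to this $A$ yields $\lambda^+\le\mathfrak b(A)\le\mathfrak d(A)\le\lambda^+$, so $\mathfrak d(A)=\mathfrak b(A)=\lambda^+$, establishing (1). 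I expect the scale-existence input to be the genuine obstacle: under $\gch$ it is immediate from $\lambda^{\cf(\lambda)}=\lambda^+$, but in $\zfc$ it is exactly Shelah's theorem that an exact scale of length $\lambda^+$ exists regardless of cardinal arithmetic, which is what Theorem~1.5 of \cite{Sh:355} supplies.

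For (2), under $\cf(\lambda)>\omega$ I would refine the domain so that it is indexed by a club of cardinals and consists only of successors. Starting from a scale on some cofinal $A$, one thins the index set to $\{\mu^+\mid\mu\in C\}$ for a sufficiently sparse $C\s\lambda$; the role of $\cf(\lambda)>\omega$ is that $C$ can then be arranged to be genuinely closed (under increasing unions of length $<\cf(\lambda)$), which is what lets the scale localize to the thinned, successor-indexed set via the pcf transitivity/localization results. The main delicacy is ensuring that passing to successors does not destroy the true cofinality $\lambda^+$; this is precisely where the club structure and the uncountable cofinality are used, and it is again furnished by \cite{Sh:355} (Claim~2.1).
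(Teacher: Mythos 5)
This statement is presented in the paper as a \emph{Fact} cited from \cite{Sh:355}; the paper supplies no proof of its own, so there is nothing internal to compare your argument against. On its own terms, your write-up is sound where it is elementary and honest about where it is not. The lower bound is correct: for a sparse $\Theta=\{\theta_i\mid i<\cf(\lambda)\}$ with $\cf(\lambda)<\min(\Theta)$ and $\theta_j>\sum_{i<j}\theta_i$, your partition-and-diagonalize argument does show that every subfamily of $\prod\Theta$ of size at most $\lambda$ is $<^*$-bounded, whence $\mathfrak b(\Theta)\ge\lambda^+$; and your observations that a cofinal family is unbounded, that a scale restricts to a scale on any cofinal subset of its domain, and that sparseness can be imposed by thinning, are all fine. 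The upper bound $\mathfrak d(\Theta)\le\lambda^+$ is, as you say, exactly Shelah's scale-existence theorem, i.e.\ Theorem~1.5 of \cite{Sh:355} --- you are not proving the Fact so much as correctly locating its genuine content, which is the appropriate thing to do here. The only soft spot is clause~(2): your explanation of the role of $\cf(\lambda)>\omega$ (``$C$ can be arranged to be genuinely closed'') is not really the mechanism --- the uncountable cofinality is what allows the pcf localization/elementary-submodel argument of Claim~2.1 of \cite{Sh:355} to produce a club of $\mu$ with $\mu^+$ landing in the relevant pcf set --- but since that claim is precisely the second cited input, deferring to it is legitimate.
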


\begin{fact}[Todor\v{c}evi\'c, {\cite[pp.~289]{TodActa}; see \cite[Theorem~4.11]{MR1086455} for a proof}]\label{fact23} Suppose that $\lambda$ is a singular cardinal.
If there exists a cofinal subset $\Theta\s\reg(\lambda)$ such that $\mathfrak d(\Theta)=\lambda^+$
and $\theta\nrightarrow[\theta]^2_\theta$ holds for every $\theta\in\Theta$, then
$\lambda^+\nrightarrow[\lambda^+]^2_{\lambda^+}$ holds.\footnote{An analogous pump-up fact holds true for J{\'o}nsson-ness. See \cite[Conclusion~4.6($\gamma$)]{Sh:355}.}
\end{fact}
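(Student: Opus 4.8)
The plan is to first cut down the number of colors and then build a coloring out of the given $\theta\nrightarrow[\theta]^2_\theta$ witnesses along a scale. By the stretching argument recorded in remark~(4) following Theorem~\ref{thm21}, it suffices to produce a coloring $d:[\lambda^+]^2\rightarrow\lambda$ with $d\restriction[Y]^2$ surjective for every $Y\s\lambda^+$ of size $\lambda^+$. We are handed $\mathfrak d(\Theta)=\lambda^+$; moreover $\mathfrak b(\Theta)=\lambda^+$, since $\lambda$ is singular and hence any family of fewer than $\lambda^+$ functions in $\prod\Theta$ is $<^*$-bounded (diagonalize: enumerate $\Theta$ increasingly and dominate on a tail, using that each $\theta\in\Theta$ is regular). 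Fix therefore a $<^*$-increasing cofinal sequence $\langle f_\xi\mid\xi<\lambda^+\rangle$ in $(\prod\Theta,<^*)$, together with colorings $c_\theta:[\theta]^2\rightarrow\theta$ witnessing $\theta\nrightarrow[\theta]^2_\theta$ for each $\theta\in\Theta$. For $\alpha<\beta<\lambda^+$ the sequence gives $f_\alpha<^* f_\beta$, so the \emph{critical coordinate} $\theta(\alpha,\beta):=\min\{\theta\in\Theta\mid \forall\theta'\in\Theta\cap[\theta,\sup\Theta),\ f_\alpha(\theta')<f_\beta(\theta')\}$ is a well-defined element of $\Theta$, and I set $d(\alpha,\beta):=c_{\theta(\alpha,\beta)}(f_\alpha(\theta(\alpha,\beta)),f_\beta(\theta(\alpha,\beta)))$.

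The engine of the verification is a density observation that I would isolate as a claim: for every $Y\s\lambda^+$ of size $\lambda^+$, the set of \emph{good} coordinates $\theta\in\Theta$ for which $\{f_\gamma(\theta)\mid\gamma\in Y\}$ has size $\theta$ is cofinal in $\Theta$. Indeed, were the good set bounded, then for all $\theta$ past some $\theta_0$ the set $\{f_\gamma(\theta)\mid\gamma\in Y\}$ would be bounded in the regular cardinal $\theta$; collecting these bounds into a single $g\in\prod\Theta$ gives $f_\gamma<^* g$ for every $\gamma\in Y$. But $Y$ has size $\lambda^+$, hence is cofinal in $\lambda^+$, so $\{f_\gamma\mid\gamma\in Y\}$ is $<^*$-cofinal, contradicting that $g$ bounds it on a tail. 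At a good coordinate $\theta$ the set $\{f_\gamma(\theta)\mid\gamma\in Y\}$ is a $\theta$-sized subset of $\theta$, so $c_\theta$ restricted to its pairs is surjective onto $\theta$.

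To hit a prescribed $i<\lambda$ inside $[Y]^2$, I would aim to produce a pair $\{\alpha,\beta\}\s Y$ whose critical coordinate is a good coordinate $\theta^*>i$ and whose values there satisfy $c_{\theta^*}(f_\alpha(\theta^*),f_\beta(\theta^*))=i$, which is available by the surjectivity just noted. \textbf{This alignment is the main obstacle.} Surjectivity of $c_{\theta^*}$ only supplies values at $\theta^*$ realizing $i$, whereas the critical coordinate of the resulting pair is governed by the \emph{entire} tails of $f_\alpha$ and $f_\beta$: a naive choice may separate strictly earlier or later than $\theta^*$ and so be colored at the wrong coordinate. To force the critical coordinate to land on $\theta^*$ one must simultaneously control the behavior of the functions below $\theta^*$ (to create the separation exactly at $\theta^*$) and on the tail $[\theta^*,\sup\Theta)$ (to keep them comparable). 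The standard route, which I would follow, is to first pass to a size-$\lambda^+$ subset of $Y$ on which the scale is well behaved --- exploiting the goodness/approachability of club-many points of $\lambda^+$, extracted via elementary submodels --- so as to secure a $\theta^*$-sized subfamily whose $f$-values are coordinatewise comparable on the tail yet distinct at $\theta^*$, and then to adjust the lower coordinates to pin the critical coordinate to $\theta^*$. Feeding the witnessing values for $i$ through this alignment yields a pair of $Y$ with $d$-color $i$, establishing $\lambda^+\nrightarrow[\lambda^+]^2_\lambda$ and hence, after stretching, the desired $\lambda^+\nrightarrow[\lambda^+]^2_{\lambda^+}$.
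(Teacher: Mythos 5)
Your setup is fine (the reduction to $\lambda$ colors, the extraction of a $<^*$-increasing dominating sequence, and the density claim that cofinally many coordinates $\theta$ see $\theta$-many values of $Y$ are all correct), but the proof collapses exactly at the point you yourself flag as ``the main obstacle,'' and the resolution you sketch does not work as stated. To color a pair at a prescribed good coordinate $\theta^*$ you must produce $\alpha<\beta$ in $Y$ with $f_\alpha<f_\beta$ pointwise on $\Theta\cap[\theta^*,\lambda)$ \emph{and} with a failure of domination immediately below $\theta^*$ (and, if $\theta^*$ is a limit point of $\Theta$, cofinally often below it), while simultaneously prescribing the pair $(f_\alpha(\theta^*),f_\beta(\theta^*))$ to be a $c_{\theta^*}$-witness for the color $i$. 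You cannot ``adjust the lower coordinates'': the functions are fixed, and a subfamily that is comparable on the tail and distinct at $\theta^*$ may still have its pairs separate at coordinates strictly below $\theta^*$, so the pair realizing $i$ gets colored at the wrong place. The appeal to ``goodness/approachability of club-many points'' does not repair this: good points of a scale are guaranteed only stationarily often and only in suitable cofinalities, and in any case goodness concerns exact upper bounds at ordinals of $\lambda^+$, not the selection of a $\theta^*$-sized coherent subfamily of an arbitrary $Y$. Since every quoted ingredient before this step is soft, the entire content of the theorem lives in precisely the step you leave as a gesture; as written this is a genuine gap, not a routine verification.

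For comparison, the paper does not reprove this Fact (it cites Todor\v{c}evi\'c and Burke--Magidor), but its proof of the stronger Proposition~\ref{prop25} (which weakens $\mathfrak d(\Theta)=\lambda^+$ to $\mathfrak b(\Theta)=\lambda^+$) shows the intended way around the alignment problem: one invokes Eisworth's $\zfc$ transfer theorem providing a map $d:[\lambda^+]^2\rightarrow[\lambda^+]^2\times\cf(\lambda)$ such that for every full-sized $Y$ there is a stationary $Z$ with $d``[Y]^2\supseteq[Z]^2\times\cf(\lambda)$. The coloring is then $c(\alpha,\beta):=c_i(f_\gamma(i),f_\delta(i))$ where $d(\alpha,\beta)=(\gamma,\delta,i)$, so the coordinate $i$ and the pair $(\gamma,\delta)\in[Z]^2$ are chosen freely and independently, and no critical-coordinate bookkeeping is needed; the only scale input is (a strengthening of) your density claim, namely that cofinally many coordinates admit cofinally-attained values on $Z$. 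If you want to salvage your approach, either import that transfer map or follow the original Todor\v{c}evi\'c/Burke--Magidor argument in detail --- but the detail is the theorem.
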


Recall that by the main result of \cite{TodActa}, $\theta\nrightarrow[\theta]^2_\theta$ holds whenever $\theta$ is a successor of a regular cardinal,
thus we arrive at the following informative conclusion.
\begin{cor}\label{cor24}
\begin{enumerate}
\item If $\lambda$ is a singular cardinal satisfying $\lambda^+\rightarrow[\lambda^+]^2_{\lambda^+}$,
then $\lambda$ is the limit of (weakly) inaccessibles;
\item If $\lambda$ is the first cardinal to satisfy $\lambda^+\rightarrow[\lambda^+]^2_{\lambda^+}$, then $\lambda$ is a singular cardinal of countable cofinality.
\end{enumerate}
\end{cor}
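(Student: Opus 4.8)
The plan is to prove both items by contraposition, feeding the pcf reduction of Fact~\ref{fact23} with Todor{\v{c}}evi{\'c}'s theorem that $\theta\nrightarrow[\theta]^2_\theta$ holds whenever $\theta$ is a successor of a regular cardinal. Throughout I would use the elementary observation that $\mathfrak d(\Theta)$ is unchanged upon deleting a bounded subset of $\Theta$: restricting a cofinal family of $(\prod\Theta,<^*)$ gives a cofinal family of the smaller product, and conversely any cofinal family of the smaller product extends (by arbitrary values on the deleted part) to a cofinal family of the larger one, since $<^*$ depends only on a tail of $\sup(\Theta)=\lambda$. Recall also that a singular cardinal is a limit cardinal, and that a regular cardinal is either a successor cardinal or a (weakly) inaccessible.

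For item (1), suppose $\lambda$ is singular but \emph{not} a limit of inaccessibles, and fix $\mu<\lambda$ so large that $(\mu,\lambda)$ contains no inaccessible; then no cardinal $\rho$ with $\mu<\rho\le\lambda$ is a limit of inaccessibles, and every regular cardinal in $(\mu,\lambda)$ is a successor cardinal. I would prove, by induction on the singular cardinals $\rho\in(\mu,\lambda]$, that $\rho^+\nrightarrow[\rho^+]^2_{\rho^+}$ holds. Given such a $\rho$, apply Fact~\ref{fact22}(1) to get a cofinal $\Theta\s\reg(\rho)$ with $\mathfrak d(\Theta)=\rho^+$, and delete a bounded part so that $\Theta\s(\mu^+,\rho)$ while retaining $\mathfrak d(\Theta)=\rho^+$. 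Each $\theta\in\Theta$ is a successor cardinal $\theta=\sigma^+$ with $\sigma>\mu$: if $\sigma$ is regular, then $\theta\nrightarrow[\theta]^2_\theta$ by Todor{\v{c}}evi{\'c}; if $\sigma$ is singular, then $\mu<\sigma<\rho$, so the induction hypothesis yields $\theta=\sigma^+\nrightarrow[\sigma^+]^2_{\sigma^+}$. Either way every member of $\Theta$ strongly colors, so Fact~\ref{fact23} gives $\rho^+\nrightarrow[\rho^+]^2_{\rho^+}$. Taking $\rho=\lambda$ contradicts $\lambda^+\rightarrow[\lambda^+]^2_{\lambda^+}$.

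For item (2), let $\lambda$ be least with $\lambda^+\rightarrow[\lambda^+]^2_{\lambda^+}$, so that $\nu^+\nrightarrow[\nu^+]^2_{\nu^+}$ holds for every $\nu<\lambda$. If $\lambda$ were regular then $\lambda^+$ would be a successor of a regular cardinal, refuting $\lambda^+\rightarrow[\lambda^+]^2_{\lambda^+}$ by Todor{\v{c}}evi{\'c}; hence $\lambda$ is singular. Suppose toward a contradiction that $\cf(\lambda)>\omega$. Then Fact~\ref{fact22}(2) supplies, for a club $C\s\lambda$, a cofinal $\Theta=\{\mu^+\mid\mu\in C\}\s\reg(\lambda)$ with $\mathfrak d(\Theta)=\lambda^+$. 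Every $\mu\in C$ satisfies $\mu<\lambda$, so minimality of $\lambda$ gives $\mu^+\nrightarrow[\mu^+]^2_{\mu^+}$; thus every element of $\Theta$ strongly colors, and Fact~\ref{fact23} yields the contradiction $\lambda^+\nrightarrow[\lambda^+]^2_{\lambda^+}$. Therefore $\cf(\lambda)=\omega$.

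The main subtlety lies in item (1): in the inaccessible-free tail a regular cardinal may be the successor of a \emph{singular} cardinal, about which Todor{\v{c}}evi{\'c}'s theorem is silent, forcing a recursion rather than a one-shot appeal to Fact~\ref{fact23}. The point that lets a single fixed $\mu$ drive the whole induction is that the absence of inaccessibles in $(\mu,\lambda)$ is inherited by every intermediate cardinal, so each predecessor $\sigma$ one meets is again singular and not a limit of inaccessibles, of exactly the form the induction hypothesis covers. By contrast item (2) is comparatively direct, the key being that Fact~\ref{fact22}(2) — available \emph{precisely} because $\cf(\lambda)>\omega$ — furnishes a $\Theta$ consisting solely of successor cardinals $\mu^+$, on which the minimality of $\lambda$ bites; the countable-cofinality case escapes the argument because the $\Theta$ guaranteed by Fact~\ref{fact22}(1) may consist of inaccessibles, about which minimality says nothing.
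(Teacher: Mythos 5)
Your proposal is correct and uses exactly the ingredients the paper intends for this (unproved) corollary: Fact~\ref{fact22}, Fact~\ref{fact23}, and Todor\v{c}evi\'c's theorem for successors of regulars, with clause~(2) exploiting the form of $\Theta$ supplied by Fact~\ref{fact22}(2) when $\cf(\lambda)>\omega$. The induction you run for clause~(1), needed because successors of singulars in the inaccessible-free tail are not covered by Todor\v{c}evi\'c's theorem directly, is precisely the detail the paper leaves implicit, and you carry it out correctly.
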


Question 6.7 of the recent preprint \cite{2211.13361} is equivalent to asking whether  hypothesis of $\mathfrak{d}(\Theta)$ = $\lambda^+$
of Fact~\ref{fact23} may be reduced to $\mathfrak{b}(\Theta)$ = $\lambda^+$. We answer this question in the affirmative.

\begin{prop}\label{prop25} Suppose that $\lambda$ is a singular cardinal.
If there exists a cofinal subset $\Theta\s\reg(\lambda)$ such that $\mathfrak b(\Theta)=\lambda^+$
and $\theta\nrightarrow[\theta]^2_\theta$ holds for every $\theta\in\Theta$, then
$\lambda^+\nrightarrow[\lambda^+]^2_{\lambda^+}$ holds.
\end{prop}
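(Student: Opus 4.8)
The plan is to reduce to Fact~\ref{fact23} by replacing $\Theta$ with a cofinal subset on which the bounding and dominating numbers coincide. Concretely, I would produce a cofinal $\Theta'\s\Theta$ satisfying $\mathfrak d(\Theta')=\lambda^+$; since $\Theta'\s\Theta\s\reg(\lambda)$, the hypothesis $\theta\nrightarrow[\theta]^2_\theta$ is automatically inherited by every $\theta\in\Theta'$, so Fact~\ref{fact23} applied to $\Theta'$ yields $\lambda^+\nrightarrow[\lambda^+]^2_{\lambda^+}$. Thus everything rests on extracting such a $\Theta'$ from the weaker assumption $\mathfrak b(\Theta)=\lambda^+$.

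For the extraction I would turn to \emph{pcf} theory. First, since $<^*$ ignores bounded subsets of $\lambda$, deleting a bounded initial segment of $\Theta$ alters neither $\mathfrak b(\Theta)$ nor $\mathfrak d(\Theta)$; doing so, I may normalize to $|\Theta|<\min(\Theta)$, so that the pcf calculus applies to $(\prod\Theta,<^*)=(\prod\Theta/J^{bd})$. The relevant identities are then $\mathfrak d(\Theta)=\max(\operatorname{pcf}(\Theta))$ and $\mathfrak b(\Theta)=\min(\operatorname{pcf}(\Theta))$; in particular the hypothesis $\mathfrak b(\Theta)=\lambda^+$ says exactly that $\lambda^+$ is the least element of $\operatorname{pcf}(\Theta)$. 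Let $B=B_{\lambda^+}\s\Theta$ be Shelah's generator for this least element. Because $\lambda^+=\min(\operatorname{pcf}(\Theta))$, the ideal $J_{<\lambda^+}$ coincides with $J^{bd}$, and the defining property of the generator gives $\operatorname{tcf}(\prod B/J^{bd})=\lambda^+$. As $B\notin J^{bd}$, it is cofinal in $\lambda$, and since it carries a true cofinality we obtain $\mathfrak b(B)=\mathfrak d(B)=\lambda^+$. Setting $\Theta':=B$ completes the reduction.

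The crux --- and the step I expect to be the genuine obstacle --- is the passage from an \emph{unbounded} family of size $\lambda^+$ (which is all that $\mathfrak b(\Theta)=\lambda^+$ delivers) to a cofinal subset $\Theta'$ on which such a family becomes \emph{dominating}; that is, the identification $\mathfrak b(\Theta)=\min(\operatorname{pcf}(\Theta))$ together with the existence of a generator supporting an exact true cofinality. This is precisely the content of Shelah's pcf theorem, and it is where all the weight of the argument sits; by contrast, the normalization to $|\Theta|<\min(\Theta)$ and the inheritance of the strong-coloring hypothesis along $\Theta'\s\Theta$ are routine. I would also record the easy bound $\lambda^+\le\mathfrak b(\Theta)$ (a diagonalization showing that every family of size $\le\lambda$ in $\prod\Theta$ is $<^*$-bounded), since this confirms that $\mathfrak b(\Theta)=\lambda^+$ is the extremal hypothesis and that $\lambda^+$ is indeed forced to be the minimum of $\operatorname{pcf}(\Theta)$.
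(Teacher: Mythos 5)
Your route is genuinely different from the paper's: you reduce the $\mathfrak b$-hypothesis to the $\mathfrak d$-hypothesis of Fact~\ref{fact23} by locating a cofinal $\Theta'\s\Theta$ carrying true cofinality $\lambda^+$, whereas the paper never passes through Fact~\ref{fact23} at all. Instead it fixes witnesses $c_i$ to $\lambda_i\nrightarrow[\lambda_i]^2_{\lambda_i}$, a $<^*$-increasing unbounded sequence $\langle f_\alpha\mid\alpha<\lambda^+\rangle$, proves a pigeonhole claim (for every full-size $Z$, cofinally many coordinates $i$ admit $\lambda_i$-many values $\epsilon$ attained by $f_\beta(i)$ for cofinally many $\beta\in Z$), and then composes with Eisworth's transformation $d:[\lambda^+]^2\rightarrow[\lambda^+]^2\times\cf(\lambda)$ from \cite{MR3087058} and a stretching argument. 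Your core pcf step is sound for a progressive $\Theta$: relative to the bounded ideal, $\mathfrak b(\Theta)=\min(\operatorname{pcf}(\Theta))$, $J_{<\lambda^+}[\Theta]$ reduces to the bounded ideal, and the generator $B_{\lambda^+}$ is a cofinal subset with $\operatorname{tcf}(\prod B_{\lambda^+}/J^{bd})=\lambda^+$, hence $\mathfrak d(B_{\lambda^+})=\lambda^+$. What your approach buys is a transparent explanation of why $\mathfrak b$ suffices; what it costs is reliance on the full pcf generator theorem, where the paper's argument uses only the unbounded sequence directly.

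The genuine gap is your normalization to $|\Theta|<\min(\Theta)$. Deleting a bounded initial segment does not achieve this when $|\Theta|>\cf(\lambda)$, and the hypothesis of the proposition permits $|\Theta|$ to be as large as $\lambda$ (e.g.\ $\Theta=\reg(\lambda)$ when $\lambda$ is a limit of inaccessibles, which is exactly the situation singled out by Corollary~\ref{cor24}(1)). To reach a progressive set you must first thin $\Theta$ to a cofinal subset of order type $\cf(\lambda)$ --- the same move the paper makes --- and only then discard an initial segment. This thinning is not free: for cofinal $\Theta'\s\Theta$ one only gets $\mathfrak b(\Theta')\ge\mathfrak b(\Theta)$ (an unbounded family in $\prod\Theta'$ extends to one in $\prod\Theta$, but not conversely), so you need to argue that \emph{some} such $\Theta'$ still satisfies $\mathfrak b(\Theta')=\lambda^+$, i.e.\ that your $\lambda^+$-sized unbounded family remains unbounded after restriction to a suitable $\Theta'$ of order type $\cf(\lambda)$. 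Without that step, the identification of $\lambda^+$ with $\min(\operatorname{pcf}(\Theta))$ and the appeal to generators are not available, and the argument does not get off the ground in the case $|\Theta|>\cf(\lambda)$. Your closing remark that $\lambda^+\le\mathfrak b(\Theta)$ always holds is correct and worth recording, but it cuts the wrong way here: it shows the thinned set satisfies $\mathfrak b(\Theta')\ge\lambda^+$, not $\le\lambda^+$.
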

\begin{proof} Suppose that $\Theta$ is a cofinal subset of $\reg(\lambda)$ such that $\mathfrak b(\Theta)=\lambda^+$
and $\theta\nrightarrow[\theta]^2_\theta$ holds for every $\theta\in\Theta$.
By possibly passing to a cofinal subset, we may assume that $\otp(\Theta)=\cf(\lambda)$,
and let $\langle \lambda_i\mid i<\cf(\lambda)\rangle$ be the increasing enumeration of $\Theta$.
For each $i<\cf(\lambda)$, let $c_i:[\lambda_i]^2 \rightarrow \lambda_i$ be a witness for $\lambda_i \nrightarrow [\lambda_i]^2_{\lambda_i}$.

Using $\mathfrak{b}(\Theta)=\lambda^+$, we may fix an unbounded sequence $\vec f=\langle f_\alpha\mid \alpha<\lambda^+\rangle$ of functions in $\prod\Theta$
satisfying, in addition, that $f_\alpha<^* f_\beta$ for all $\alpha<\beta<\lambda^+$.
This buys us the following standard feature.

\begin{claim} For every subset $Z\s\lambda^+$ of full size, the following set
$$I_Z:=\{ i<\cf(\lambda)\mid \sup\{ \epsilon<\lambda_i\mid \sup\{\beta\in Z\mid f_\beta(i)=\epsilon\}=\lambda^+\}=\lambda_i\}$$
is cofinal in $\cf(\lambda)$.
\end{claim}
\begin{proof} Let $Z\s\lambda^+$ of full size be given.
Fix a function $g\in\prod_{i<\cf(\lambda)}\lambda_i$ satisfying that for every $i\in\cf(\lambda)\setminus I_Z$:
$$g(i):=\sup\{ \epsilon<\lambda_i\mid \sup\{\beta\in Z\mid f_\beta(i)=\epsilon\}=\lambda^+\}+1.$$
As $\vec f$ is unbounded, there exists some $\alpha\in\lambda^+$ such that $\{ i<\cf(\lambda)\mid f_\alpha(i)\ge g(i)\}$ is cofinal in $\cf(\lambda)$.
Recalling that $f_\alpha<^* f_\beta$ for all $\alpha<\beta<\lambda^+$,
we infer the existence of a final segment $I$ of $\{ i<\cf(\lambda)\mid f_\alpha(i)\ge g(i)\}$ such that, for cofinally many $\beta\in Z$,
$$I\s \{ i<\cf(\lambda)\mid f_\beta(i)\ge g(i)\}.$$
By the pigeonhole principle, $I\s I_Z$. In particular, $I_Z$ is cofinal in $\cf(\lambda)$.
\end{proof}

Next, by \cite[Theorem~1]{MR3087058}, we may fix a map $d:[\lambda^+]^2 \rightarrow [\lambda^+]^2\times \cf(\lambda)$
satisfying that for every $Y \s \lambda^+$ of full size, there is a stationary $Z \s \lambda^+$ such that $d``[Y]^2$ covers $[Z]^2\times \cf(\lambda)$.
Recall that by a stretching argument, it suffices to prove that $\lambda^+\nrightarrow[\lambda^+]^2_\lambda$ holds.
To this end, we define a coloring $c:[\lambda^+]^2 \rightarrow \lambda$ by letting $$c(\alpha,\beta):=c_i (f_\gamma(i),f_\delta(i))$$ whenever $d(\alpha,\beta)=(\gamma,\delta,i)$.

To see this works, suppose that we are given $Y\s\lambda^+$ of full size,
and a prescribed color $\tau<\lambda$.
Pick $Z \s \lambda^+$ of full size such that $d``[Y]^2$ covers $[Z]^2\times \cf(\lambda)$.
Pick a large enough $i\in I_Z$ such that $\tau<\lambda_i$.
Since $E_i:=\{ \epsilon<\lambda_i\mid \sup\{\beta\in Z\mid f_\beta(i)=\epsilon\}=\lambda^+\}$ is cofinal in $\lambda_i$,
we may find a pair $\epsilon<\epsilon'$ of ordinals in $E_i$ such that $c_i(\epsilon,\epsilon')=\tau$.
Pick $\gamma\in Z$ such that $f_\gamma(i)=\epsilon$ and then pick $\delta\in Z$ above $\gamma$ such that $f_\delta(i)=\epsilon'$.
Let $\alpha<\beta$ be a pair of ordinals in $Y$ such that $d(\alpha,\beta)=(\gamma,\delta,i)$.
Then
$$c(\alpha,\beta)=c_i(f_\gamma(i),f_\delta(i))=c_i(\epsilon,\epsilon')=\tau,$$
as sought.
\end{proof}

Recall that the singular cardinals hypothesis ($\sch$) is the assertion that $\sch_\lambda$ holds for every singular cardinal $\lambda$,
where $\sch_\lambda$ means that if $2^{\cf(\lambda)}<\lambda$, then $2^\lambda=\lambda^+$.
Motivated by Proposition~\ref{prop25},
we formulate the following particular failure of the $\sch$:
\begin{defn} For a property $\varphi$ of a cardinal (such as being inaccessible) and a singular cardinal $\lambda$,
the \emph{$\varphi$ failure of $\sch_\lambda$} asserts that
for every cofinal $\Theta\s\reg(\lambda)$ with $\mathfrak{b}(\Theta)=\lambda^+$,
it is the case that co-boundedly many $\theta\in\Theta$ has property $\varphi$.
\end{defn}

\begin{q}\label{p1} Is the weakly compact failure of $\sch_\lambda$ consistent (from a large cardinals hypothesis)?
\end{q}

Note that by \cite[Theorem~1]{MR0379200}, the strongly compact failure of $\sch_\lambda$ is inconsistent.
Adolf informed us that the consistency of the weakly compact failure of $\sch_\lambda$ requires a Woodin cardinal.

Ben-Neria suggested that an affirmative answer to Problem~\ref{p1} may be obtained using the forcing of Merimovich from \cite{MR2805299}.
Specifically, suppose $j : V \to M$ is a $V$-definable nontrivial elementary embedding
with critical point $\lambda$. Denote $\mu := j(\lambda)$ and $\kappa := j(\mu)$ and further assume ${}^{<\mu} M \subseteq M$ and $V_{\kappa} \subseteq M$.
Let $\vec E$ be the extender derived from $j$ with measures $\vec E(d)$ for $d \in [\kappa]^{<\mu}$,
and finally let $\mathbb{P}_{\vec E}$ be the associated extender-based Prikry forcing of \cite{MR2805299}.
Merimovich proved that this forcing changes the cofinality of $\lambda$ to $\omega$,  does not add bounded subsets of $\lambda$, collapses the cardinals in the interval $(\lambda,\mu)$ and only those, and adds $\kappa$-many $\omega$-sequences to $\lambda$. Ben-Neria claims that $V^{\mathbb{P}_{\vec E}}$ witnesses the measurable failure of $\sch_\lambda$.

\section{Compactness and incompactness}
We mentioned in the introduction that
the existence of a nonreflecting stationary subset of $\kappa$ implies that $\kappa\nrightarrow[\kappa]^2_\kappa$ holds. This was extended in \cite{paper15}.
In the context of successors of singulars, a stronger consequence may be derived from the failure of $\kappa\nrightarrow[\kappa]^2_\kappa$, as follows.
\begin{fact}[Eisworth, \cite{MR3051629}]\label{fact31} Suppose that $\lambda$ is a singular cardinal such that $\lambda^+\rightarrow[\lambda^+]^2_{\lambda^+}$ holds.
Then every family $\mathcal S$ of less than $\cf(\lambda)$-many stationary subsets of $\lambda^+$
reflects simultaneously, that is,
there exists an ordinal $\delta<\lambda^+$ of uncountable cofinality such that $S\cap\delta$ is stationary in $\delta$
for every $S\in\mathcal S$.
\end{fact}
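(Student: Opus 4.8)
The plan is to establish the contrapositive: assuming a family $\langle S_j\mid j<\theta\rangle$ of stationary subsets of $\lambda^+$ with $\theta<\cf(\lambda)$ that fails to reflect simultaneously at any ordinal of uncountable cofinality, I will manufacture a witness to $\lambda^+\nrightarrow[\lambda^+]^2_{\lambda^+}$. By the stretching argument recorded in Remark~(4) following Theorem~\ref{thm21}, it suffices to build a coloring $c:[\lambda^+]^2\rightarrow\lambda$ whose restriction to $[Y]^2$ is surjective for every $Y\in[\lambda^+]^{\lambda^+}$. The first step harvests the non-reflection data: for each $\delta\in E^{\lambda^+}_{>\omega}$ there are an index $j(\delta)<\theta$ and a club $e_\delta\s\delta$ with $e_\delta\cap S_{j(\delta)}=\emptyset$. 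Since a stationary subset of $\lambda^+$ cannot be covered by fewer than $\lambda^+$ nonstationary sets and $\theta<\cf(\lambda)\le\lambda<\lambda^+$, there is a single index $j^*$ for which $T:=\{\delta\in E^{\lambda^+}_{>\omega}\mid j(\delta)=j^*\}$ is stationary; writing $S:=S_{j^*}$, we are left with a stationary set $T$ at each of whose points $S$ fails to reflect. Splitting $T$ once more, fix an uncountable $\mu\in\reg(\lambda)$ for which $T\cap E^{\lambda^+}_\mu$ is stationary.

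The second step imports the singular structure. Using Fact~\ref{fact22} I fix a cofinal $\Theta=\langle\lambda_i\mid i<\cf(\lambda)\rangle$ in $\reg(\lambda)$ with $\mathfrak b(\Theta)=\lambda^+$, together with an $<^*$-increasing unbounded scale $\vec f=\langle f_\alpha\mid\alpha<\lambda^+\rangle$ in $\prod\Theta$ exactly as in the proof of Proposition~\ref{prop25}. That scale supplies, for every full-sized $Z\s\lambda^+$, a cofinal set $I_Z$ of coordinates along which the $f$-values of $Z$ are spread out; this \emph{spreading} is the engine that turns a prescribed target $\tau<\lambda_i$ into an actual ordinal $\gamma$ with $f_\gamma(i)=\tau$. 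In parallel I invoke Shelah's club-guessing theorem to fix on $T\cap E^{\lambda^+}_\mu$ a sequence $\langle C_\delta\mid\delta\in T\cap E^{\lambda^+}_\mu\rangle$ with $C_\delta\s e_\delta$ (so that $C_\delta\cap S=\emptyset$) that guesses clubs, i.e.\ every club of $\lambda^+$ contains a tail of $C_\delta$ for stationarily many such $\delta$.

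With these in hand, the coloring is defined through minimal walks along a $C$-sequence refining $\langle C_\delta\rangle$: for $\alpha<\beta$ I walk from $\beta$ down to $\alpha$, isolate the last guessed point $\delta\in T\cap E^{\lambda^+}_\mu$ encountered together with the step $\gamma$ taken from $\delta$, read off a coordinate $i$ governed by $\cf(\delta)=\mu$, and set $c(\alpha,\beta):=f_\gamma(i)$. The crucial point is that the surjectivity engine is \emph{not} a family of lower strong colorings --- we have none, since the $\lambda_i$ may well be weakly compact (Problem~\ref{p1}) --- but rather the interplay of the spreading $I_Z$ with the routing of walks. To verify surjectivity, given $Y$ of full size and a target $\tau<\lambda$, I choose $i\in I_Y$ with $\tau<\lambda_i$, use $I_Y$ to locate $\gamma\in Y$ with $f_\gamma(i)=\tau$, and then have the guessing sequence catch a club built from the closure points of $Y$ at some $\delta\in T\cap E^{\lambda^+}_\mu$; since $C_\delta$ avoids $S$ while $S$ remains stationary, the walk from a suitably chosen $\beta\in Y$ down to a suitably chosen $\alpha\in Y$ is steered so as to step onto $\gamma$ out of $\delta$, whence $c(\alpha,\beta)=f_\gamma(i)=\tau$.

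The main obstacle, and the genuine content of Eisworth's theorem, is forcing these three ingredients to cohere. The guessed clubs $C_\delta$ must simultaneously (i) guess clubs derived from an arbitrary $Y$, (ii) lie inside the non-reflection clubs $e_\delta$ so as to dodge $S$, and (iii) meet the scale at points sufficiently \emph{good} for the coordinate value $f_\gamma(i)$ to be prescribable along the walk. Reconciling club guessing with the exactness of the scale at the guessed points, and doing so while respecting the constraint $\cf(\delta)=\mu>\omega$ that the reflection conclusion demands, is exactly where the delicate pcf bookkeeping lives, and it is the step I expect to consume the bulk of the argument.
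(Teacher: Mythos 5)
The paper records this result as a Fact with a citation to Eisworth and supplies no proof, so there is no in-paper argument to compare against; judged on its own terms, your proposal breaks down at its very first step. After harvesting, for each $\delta\in E^{\lambda^+}_{>\omega}$, an index $j(\delta)$ and a club $e_\delta\s\delta$ disjoint from $S_{j(\delta)}$, you apply the pigeonhole principle to extract a single index $j^*$ and a stationary $T$ on which $S:=S_{j^*}$ fails to reflect, and from that point on your construction uses \emph{only} the pair $(S,T)$ and the clubs $e_\delta$. But this reduced datum is a theorem of $\zfc$: for any singular $\lambda$ one may take $S:=E^{\lambda^+}_{\ge\omega_2}$ and $T:=E^{\lambda^+}_{\omega_1}$, since for $\delta$ of cofinality $\omega_1$ the accumulation points of a club of order-type $\omega_1$ in $\delta$ form a club consisting of ordinals of cofinality $\le\omega_1$, hence disjoint from $S$. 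So if your argument could be completed from the reduced data, it would prove $\lambda^+\nrightarrow[\lambda^+]^2_{\lambda^+}$ outright in $\zfc$ for every singular $\lambda$, contradicting Corollary~\ref{cor24} being nonvacuous and closing the central open problem of this survey. The pigeonhole step discards exactly the simultaneity of the failure of reflection, which is the entire content of the hypothesis; any correct proof must keep all $\theta$ sets in play at once (e.g., by using the family $\langle e^j_\delta\mid j<\theta\rangle$ of avoiding clubs at each $\delta$ and exploiting that $\theta<\cf(\lambda)$ against the scale, which is where Eisworth's ``impossible ideals'' enter).

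Beyond this, the construction itself is not carried out. The coloring is described as ``$c(\alpha,\beta):=f_\gamma(i)$'' where $i$ is ``read off a coordinate governed by $\cf(\delta)=\mu$'' --- with $\mu$ fixed this does not produce a coordinate varying over $\cf(\lambda)$, and $f_\gamma(i)<\lambda_i$ would confine the range to a bounded subset of $\lambda$ unless $i$ genuinely varies; no mechanism is given for how the walk is ``steered so as to step onto $\gamma$ out of $\delta$,'' nor for how the disjointness $C_\delta\cap S=\emptyset$ is actually used. You candidly flag the coherence of club guessing, the scale, and the non-reflection data as ``the step I expect to consume the bulk of the argument,'' and indeed that is the theorem; as written the proposal is a plan whose load-bearing step is absent and whose opening reduction makes the plan unsalvageable in its current form.
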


The combination of the preceding fact with the results of the previous section prompts the study of models in which there exists a singular cardinal $\lambda$ of countable cofinality
such that $\sch_\lambda$ fails and simultaneous reflection of stationary subsets of $\lambda^+$ holds.
The existence of such a model was demonstrated only very recently.
In \cite{paper42}, Poveda, Rinot and Sinapova obtained such a model using {iterated Prikry-type forcing}, continuing the work of Sharon \cite{Sharon}.
In \cite{bhu}, Ben-Neria, Hayut and Unger obtained such a model using {iterated ultrapowers}. A third model was constructed by Gitik in \cite{GitikRef}.

A possible explanation of the difficulty in getting such a model is in the fact that the failure of $\sch$
implies the failure of reflection of two-cardinal stationary sets. Specifically,
the failure of $\sch_\lambda$ implies the existence of a so-called \emph{better scale} at $\lambda$
which the proof of \cite[Theorem~4.1]{cfm} shows to imply the combinatorial principle $\ads_\lambda$.
Finally, by \cite[Theorem~4.2]{cfm}, if $\ads_\lambda$ holds for a singular cardinal $\lambda$ of countable cofinality,
then the reflection principle $\refl^*([\lambda^+]^{\aleph_0})$ fails.
\begin{q}
Find combinatorial consequences of the weakly compact failure of $\sch_\lambda$.
\end{q}

We continue with two more facts connecting incompactness to strong colorings.

\begin{fact}[{Jensen, \cite[Lemma~6.6]{MR309729}; Shore \cite[Lemma~1]{MR0371662}}]\label{fact32}\

If there is a $\kappa$-Souslin tree, then $\kappa \nrightarrow [\kappa]^2_\kappa$ holds.
\end{fact}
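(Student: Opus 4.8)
The plan is to construct a $\kappa$-coloring directly from a $\kappa$-Souslin tree $T$ by exploiting the interplay between the tree order $<_T$ and the ordinal order $<$ on $\kappa$. First I would arrange $T$ to have underlying set $\kappa$ in such a way that the tree levels $T_\alpha$ are pairwise-disjoint intervals compatible with the ordinal order, so that each node's level, $\mathrm{lev}(x)$, is a definable ordinal $<\kappa$ and nodes on the same level carry their own intrinsic enumeration. The key structural facts I would extract from Souslinity are these: $T$ is a tree of height $\kappa$ with levels of size $<\kappa$, every branch has length $<\kappa$, and crucially $T$ has no antichain of size $\kappa$; equivalently every subset of $T$ of size $\kappa$ contains two $<_T$-comparable nodes, and in fact contains a chain that is cofinal enough to reach arbitrarily high levels.

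The heart of the argument is to use this no-large-antichain property to guarantee that any full-size $Y\s\kappa$ induces, via the tree, enough comparable pairs at arbitrarily high levels that we can read off any prescribed color. Concretely, I would fix for each $\alpha<\kappa$ a surjection $e_\alpha\colon\mathrm{lev}(\alpha)\cup\{0\}\to$ something covering all colors below $\alpha$, and then define $c(\alpha,\beta)$ by cases: when $\alpha<_T\beta$ are tree-comparable, let $c(\alpha,\beta)$ decode a color from the level $\mathrm{lev}(\alpha)$ at which $\alpha$ sits together with the position of $\beta$; when they are incomparable, assign a default value. The design goal is that for a comparable pair $\alpha<_T\beta$ the color $c(\alpha,\beta)$ ranges over all ordinals below $\mathrm{lev}(\beta)$ as $\alpha$ ranges over the initial segment of $\beta$'s branch. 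Given a target color $\tau<\kappa$ and a full-size $Y$, I would use that $Y$ cannot be an antichain to find comparable pairs in $Y$ reaching levels above $\tau$, and then use that $Y$ meets cofinally many levels (again forced by $|Y|=\kappa$ and levels being small) to locate a pair $\alpha<_T\beta$ in $Y$ with $c(\alpha,\beta)=\tau$.

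The main obstacle I anticipate is the quantifier structure: Souslinity only directly forbids a \emph{single} antichain of size $\kappa$, but to surject I need, for \emph{each} prescribed $\tau$, a comparable pair within $Y$ at a suitably high level, and I must produce one pair hitting exactly $\tau$ rather than merely producing comparable pairs. The delicate point is arranging the coloring so that the comparable pairs $Y$ is \emph{forced} to contain are rich enough to cover all colors. The cleanest route is an inductive thinning argument: if $Y$ contained no comparable pair witnessing some color, I would peel off an antichain of size $\kappa$, contradicting Souslinity; to make this precise one repeatedly extracts comparable pairs, assigns their colors, and shows that failure to hit $\tau$ cofinally often leaves a size-$\kappa$ antichain. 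I would also need the standard observation that a $\kappa$-Souslin tree has no cofinal branch, so no single branch can exhaust $Y$ and the comparable pairs genuinely climb through cofinally many levels, which is what lets the per-level surjections $e_\alpha$ deliver every color.
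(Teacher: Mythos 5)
You have assembled the right raw materials --- a normalized Souslin tree on $\kappa$ with $\alpha<_T\beta\Rightarrow\alpha<\beta$, levels of size $<\kappa$, no chains and no antichains of size $\kappa$, and the consequence that every $Y\in[\kappa]^\kappa$ must contain $<_T$-comparable pairs, indeed comparable pairs lying above any node $s$ of the derived subtree $\{s\mid |\{y\in Y: s\le_T y\}|=\kappa\}$, which meets every level. (Note that the paper itself gives no proof of this Fact; it is cited to Jensen and to Shore, so I am measuring your plan against their arguments.) But the plan stops exactly where the theorem begins. Your coloring reads $c(\alpha,\beta)$ off a surjection $e_\beta$ fixed in advance, evaluated at the level or position of $\alpha$ among the predecessors of $\beta$; the ``design goal'' that the colors exhaust everything below $\mathrm{lev}(\beta)$ as $\alpha$ ranges over \emph{all} predecessors of $\beta$ is true but irrelevant, because a given $Y$ is only guaranteed to contain a handful of predecessors of any particular $\beta\in Y$, sitting at levels you do not control, and $e_\beta$ was chosen before $Y$ was handed to you. (There is also a smaller defect: a surjection from $\mathrm{lev}(\alpha)$ onto ``all colors below $\alpha$'' need not exist, since a node can be a large ordinal sitting at a low level; one must aim only at colors below the level and then work high in the tree.)

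The proposed repair is where the argument genuinely fails. Souslinity forbids one antichain of size $\kappa$; to conclude that $Y$ realizes a \emph{prescribed} color $\tau$ you would need to exhibit a $\kappa$-sized antichain inside the ``failure set,'' but the set of $\beta\in Y$ admitting no $\tau$-witnessing $<_T$-predecessor in $Y$ is not an antichain in any visible way, and ``repeatedly extracting comparable pairs'' removes two points at a time without ever isolating one. Passing from ``every large set contains a comparable pair'' to ``every large set realizes all $\kappa$ colors'' is precisely the content of the theorem, and it requires a further device that your outline does not supply: for instance, the observation that every $Y\in[\kappa]^\kappa$ decomposes into fewer than $\kappa$ antichains unless it contains $<_T$-chains of every order type below $\kappa$ (localizable above any node of the derived subtree), together with a coloring that decodes the target from the resulting long chains or from the meets/splitting nodes of the derived subtree; alternatively, a disjoint-refinement bookkeeping in the spirit of the proof of Theorem~\ref{thm21} threaded along the branches of $T$. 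Without one of these mechanisms the proof does not close.
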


\begin{fact}[\cite{paper18}]
If  $\square(\kappa)$  holds,\footnote{The conclusion follows from weaker hypotheses such as $\square(\kappa,{<}\omega,\sq_\nu)$ whose definition may be found in \cite[Definition~1.16]{paper29}.} then so does $\kappa \nrightarrow [\kappa]^2_\kappa$.
\end{fact}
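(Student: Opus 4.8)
The plan is to read the coloring off a $\square(\kappa)$-sequence $\vec C=\langle C_\alpha\mid\alpha<\kappa\rangle$ by running Todor\v{c}evi\'c's walks on ordinals along $\vec C$. Recall that $\vec C$ is \emph{coherent} (meaning $C_{\bar\alpha}=C_\alpha\cap\bar\alpha$ whenever $\bar\alpha\in\acc(C_\alpha)$) and, crucially, \emph{nontrivial} (no club $D\s\kappa$ satisfies $C_{\bar\alpha}=D\cap\bar\alpha$ for all $\bar\alpha\in\acc(D)$). For $\alpha<\beta<\kappa$, I would walk from $\beta$ down to $\alpha$ via $\beta_0:=\beta$ and $\beta_{i+1}:=\min(C_{\beta_i}\setminus\alpha)$, stopping at the first $n$ with $\beta_n=\alpha$, and record the standard characteristics: the trace $\mathrm{Tr}(\alpha,\beta)=\langle\beta_i\mid i\le n\rangle$ together with the weights $\otp(C_{\beta_i}\cap\alpha)$. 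The candidate base coloring $c_0:[\kappa]^2\rightarrow\kappa$ is the last-step weight $c_0(\alpha,\beta):=\otp(C_{\beta_{n-1}}\cap\alpha)$, whose range lies in $\theta:=\sup_{\gamma<\kappa}\otp(C_\gamma)$. Since $\kappa$ is regular, either $\kappa$ is a limit cardinal, in which case the cofinally many regular cardinals below $\kappa$ force $\theta=\kappa$, or $\kappa=\lambda^+$, in which case $|\theta|=\lambda$.

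The combinatorial heart is an unboundedness lemma, and this is exactly where nontriviality (as opposed to mere coherence) is indispensable: for every $Y\in[\kappa]^\kappa$ one has $c_0``[Y]^2=\theta$. The obstruction to a naive argument is that, once a high point $\gamma$ with $\otp(C_\gamma)>\tau$ has been fixed, the ordinal occupying position $\tau$ in $C_\gamma$ need not lie in $Y$; one must therefore let $\gamma$ range over a rich supply of candidates and steer the walk so that, for some pair of endpoints drawn from $Y$, the penultimate point carries $\alpha$ at the prescribed position. Were $\vec C$ threaded by a single club $D$, then on a tail the walks between points of $Y\cap\acc(D)$ would collapse onto $D$ and the weights would be pinned down; nontriviality rules this out, furnishing for every club enough $\alpha$ with $C_\alpha\neq D\cap\alpha$ to keep the last-step weight oscillating across all of $\theta$. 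As in the proof of Proposition~\ref{prop25}, it is convenient to package this freedom as a rectangles-into-squares transformation: a map $d:[\kappa]^2\rightarrow[\kappa]^2$ extracted from $\mathrm{Tr}$ such that for every $Y\in[\kappa]^\kappa$ there is $Z\in[\kappa]^\kappa$ with $d``[Y]^2\supseteq[Z]^2$, after which one colors along $d$ and is free to choose both endpoints inside $Z$.

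The lemma gives $\kappa\nrightarrow[\kappa]^2_\theta$ outright. When $\kappa$ is a limit cardinal this is already the full conclusion, as $\theta=\kappa$; when $\kappa=\lambda^+$ I would invoke the stretching argument of the fourth remark following Theorem~\ref{thm21} (extended in \cite[\S3]{paper50}) to pass from $\kappa\nrightarrow[\kappa]^2_\theta$, where $|\theta|=\lambda$, to $\kappa\nrightarrow[\kappa]^2_\kappa$. I expect the unboundedness lemma to be the main obstacle, precisely because of the three simultaneous demands it imposes: steering the walk through a penultimate point of large order type so as to feed the palette, pinning the position of $\alpha$ within that point's club to the prescribed target, and keeping both endpoints inside the given set $Y$. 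Reconciling these constraints is the delicate step, and it is here that the failure of threading built into $\square(\kappa)$ does the essential work.
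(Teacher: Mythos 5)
The paper states this result as a black-box Fact, citing \cite{paper18} for the proof, so the comparison is with the argument there. Your plan correctly locates the proof in the theory of walks on ordinals along a nontrivial coherent $C$-sequence, but as written it has a genuine gap at exactly the point you yourself flag: the ``unboundedness lemma'' asserting that the last-step weight $c_0(\alpha,\beta)=\otp(C_{\beta_{n-1}}\cap\alpha)$ is surjective onto $\theta$ over $[Y]^2$ for every $Y\in[\kappa]^\kappa$ is never proved, only motivated, and it is not a routine consequence of nontriviality. Nontriviality says that no single club threads $\vec C$; it does not by itself let you steer the penultimate point of a walk between two members of $Y$ so that $\alpha$ sits at a prescribed position of its club. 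The last-step (or maximal) weight is known to witness strong colorings at successors of \emph{regular} cardinals, where one has the nonreflecting stationary set $E^{\lambda^+}_\lambda$ and order types bounded by $\lambda$ to exploit; the case of an arbitrary regular $\kappa$ carrying only $\square(\kappa)$ --- in particular $\kappa$ inaccessible --- is precisely the case that resisted Todor\v{c}evi\'c's original method, and the proof in \cite{paper18} does not run through the last-step weight at all. It builds a considerably more elaborate coloring from oscillations of post-processed walks, and the nontriviality of $\vec C$ enters through a delicate argument producing, for each prescribed color, stationarily many ordinals at which the relevant characteristic is realized with both endpoints drawn from $Y$.

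A secondary error concerns your bookkeeping of the palette. A $\square(\kappa)$-sequence imposes no bound on $\otp(C_\gamma)$ beyond $\otp(C_\gamma)\le\gamma$, so for $\kappa=\lambda^+$ the ordinal $\theta=\sup_{\gamma<\kappa}\otp(C_\gamma)$ is typically $\kappa$ itself rather than an ordinal of cardinality $\lambda$; your dichotomy ``$\theta=\kappa$ or $|\theta|=\lambda$'' therefore does not deliver the reduction you want, and in the first alternative having range cofinal in $\kappa$ is of course far weaker than the surjectivity needed for $\kappa\nrightarrow[\kappa]^2_\kappa$. The stretching step you invoke to pass from $\lambda$ colors to $\lambda^+$ colors is fine, but it sits downstream of the missing lemma. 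In short, the skeleton is the right genre of argument, but the load-bearing step is absent, and the specific coloring you propose is not the one known to work from $\square(\kappa)$ alone.
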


The hypotheses of the last two facts have to do with particular forms of $\kappa$-Aronszajn trees. So, we ask:
\begin{q}\label{p3} Suppose that $\lambda$ is a singular cardinal and there exists a $\lambda^+$-Aronszajn tree.
Does $\lambda^+ \nrightarrow [\lambda^+]^2_\lambda$ hold?
\end{q}

Taking into account all of the results discussed so far, we arrive at the following difficult problem:
\begin{q} Modulo a large cardinals hypothesis, is the conjunction of the following consistent for some singular cardinal $\lambda$?
\begin{itemize}
\item Weakly compact failure of $\sch_\lambda$;
\item $\tp(\lambda^+)$, i.e., there are no $\lambda^+$-Aronszanjn trees;
\item Every finite family of stationary subsets of $\lambda^+$ reflect simultaneously.
\end{itemize}
\end{q}

The first model to satisfy the (usual) failure of $\sch_\lambda$ together with $\tp(\lambda^+)$ holding was constructed by Neeman in \cite{MR2665784}.
More recent works in this vein include \cite{MR4127897}.

\medskip

We end this section by pointing out that the combination of Fact~\ref{fact32}  and Problem~\ref{p3}
gives rise to the following problem (a variation of \cite[Problem~1]{MR2162107} that we studied in \cite{paper32}):
\begin{q}\label{p5}
Suppose that $\lambda$ is a singular cardinal and there exists a $\lambda^+$-Aronszajn tree.
Does there exist a $\lambda^+$-Souslin tree?
\end{q}

We find the preceding problem to be of  interest even in the context of $\gch$.
The point is that the main result of \cite{paper26}
shows that  in this context, singularizations of a regular cardinal $\lambda$ tend to do introduce $\lambda^+$-Souslin trees.
Finally, in view of the fact that the existence of a $\lambda^+$-Aronszajn tree is equivalent to $\square(\lambda^+,\lambda)$,
the following variation of Problem~\ref{p5} emerges:\footnote{Furthermore, the implication of Problem~\ref{p6} for $\lambda$ regular and uncountable holds true \cite{paper37}.}
\begin{q}\label{p6}
Suppose that $\lambda$ is a singular cardinal and $\square(\lambda^+,{<}\lambda)$ and $\gch$ both hold.
Does there exist a $\lambda^+$-Souslin tree?
\end{q}

In \cite{paper24}, an affirmative answer is given under the stronger hypothesis of $\square(\lambda^+)$.
By \cite[Corollary~2.24]{paper29}, the hypothesis of Problem~\ref{p6} is sufficient for the construction of a \emph{distributive} $\lambda^+$-Aronszajn tree.

\medskip

\section{Reductions and approximations}

We mentioned earlier that $\lambda^+\nrightarrow[\lambda^+]^2_\theta$ with $\theta=\lambda^+$
follows from $\lambda^+\nrightarrow[\lambda^+]^2_\theta$ with merely $\theta=\lambda$.
We now recall a further reduction.

\begin{fact}[Eisworth, \cite{MR3087058}] Suppose that $\lambda$ is a singular cardinal.
If $\lambda^+ \nrightarrow [\lambda^+]^2_\theta$ holds for arbitrarily large $\theta<\lambda$,
then $\lambda^+ \nrightarrow [\lambda^+]^2_{\lambda}$ holds.
\end{fact}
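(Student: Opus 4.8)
The plan is to combine the given witnesses into a single coloring, following the same strategy as in the proof of Proposition~\ref{prop25}, but now with the pcf scale replaced by a direct indexing of the target colors. Since $\lambda$ is singular and the hypothesis supplies witnesses for arbitrarily large $\theta<\lambda$, I would first fix an increasing sequence $\langle\theta_i\mid i<\cf(\lambda)\rangle$, cofinal in $\lambda$, together with colorings $c_i:[\lambda^+]^2\rightarrow\theta_i$ each witnessing $\lambda^+\nrightarrow[\lambda^+]^2_{\theta_i}$. The target coloring will take values in $\lambda=\sup_i\theta_i$, and the guiding idea is that a prescribed color $\tau<\lambda$ can be realized by routing the problem, via a suitable index $i$ with $\tau<\theta_i$, to the witness $c_i$.

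The engine that makes this routing uniform over all full-size sets is Eisworth's map $d:[\lambda^+]^2\rightarrow[\lambda^+]^2\times\cf(\lambda)$ from \cite[Theorem~1]{MR3087058}, already invoked above: for every $Y\s\lambda^+$ of full size there is a stationary (in particular, full-size) $Z\s\lambda^+$ such that $d``[Y]^2$ covers $[Z]^2\times\cf(\lambda)$. Using it, I would define $c:[\lambda^+]^2\rightarrow\lambda$ by setting $c(\alpha,\beta):=c_i(\gamma,\delta)$ whenever $d(\alpha,\beta)=(\gamma,\delta,i)$; since $c_i(\gamma,\delta)<\theta_i<\lambda$, this is well defined.

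To verify that $c$ works, suppose $Y\s\lambda^+$ has full size and $\tau<\lambda$ is prescribed, and fix $Z$ as above. Choose $i<\cf(\lambda)$ with $\tau<\theta_i$, which is possible since $\sup_i\theta_i=\lambda$. As $Z$ has full size and $c_i$ is a witness, $c_i\restriction[Z]^2$ is surjective onto $\theta_i$, so there is a pair $(\gamma,\delta)\in[Z]^2$ with $c_i(\gamma,\delta)=\tau$. The triple $(\gamma,\delta,i)$ lies in $[Z]^2\times\cf(\lambda)\s d``[Y]^2$, so we may pick $(\alpha,\beta)\in[Y]^2$ with $d(\alpha,\beta)=(\gamma,\delta,i)$, whence $c(\alpha,\beta)=c_i(\gamma,\delta)=\tau$, as sought.

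The conceptual content is entirely carried by the map $d$; the main obstacle is therefore not in the combining argument itself but in the construction of $d$, i.e. Eisworth's theorem, which simultaneously collapses every full-size $Y$ onto a single full-size $Z$ while retaining the freedom to specify the $\cf(\lambda)$-coordinate arbitrarily. Once $d$ is granted as a black box, as it is in the proof of Proposition~\ref{prop25}, the remaining steps reduce to the routine indexing and surjectivity checks carried out above.
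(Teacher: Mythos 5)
Your argument is correct: the paper states this result as a Fact without proof (citing Eisworth), but the composition-with-$d$ scheme you describe is precisely the mechanism the survey itself deploys in the proof of Proposition~\ref{prop25}, where the only additional ingredient is a scale $\langle f_\alpha\mid\alpha<\lambda^+\rangle$, needed there because the witnesses $c_i$ live on $[\lambda_i]^2$ rather than on $[\lambda^+]^2$. In the present situation the witnesses already color $[\lambda^+]^2$, so applying $c_i$ directly to the pair $(\gamma,\delta)$ produced by $d$ is exactly right, and your verification (stationarity of $Z$ giving $|Z|=\lambda^+$, the choice of $i$ with $\tau<\theta_i$, surjectivity of $c_i\restriction[Z]^2$, and pulling back through $d``[Y]^2\supseteq[Z]^2\times\cf(\lambda)$) goes through as written.
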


\begin{fact}[{Shelah, \cite[Conclusion 4.1]{Sh:355}}]\label{fact42}  \

For every singular cardinal $\lambda$,
$\lambda^+ \nrightarrow [\lambda^+]^2_{\cf(\lambda)}$  holds.
\end{fact}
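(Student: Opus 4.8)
The plan is to read off the coloring from the stronger pair-covering map already invoked in the proof of Proposition~\ref{prop25}. Recall that by \cite[Theorem~1]{MR3087058} there is a map $d:[\lambda^+]^2\rightarrow[\lambda^+]^2\times\cf(\lambda)$ such that for every $Y\s\lambda^+$ of full size there is a stationary (in particular, full-size) $Z\s\lambda^+$ with $d``[Y]^2\supseteq[Z]^2\times\cf(\lambda)$. Define $c:[\lambda^+]^2\rightarrow\cf(\lambda)$ by letting $c(\alpha,\beta)$ be the third coordinate of $d(\alpha,\beta)$. To see that $c$ witnesses $\lambda^+\nrightarrow[\lambda^+]^2_{\cf(\lambda)}$, fix $Y\s\lambda^+$ of full size and a target color $j<\cf(\lambda)$, and let $Z$ be as above. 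Since $Z$ has more than one element, pick any pair $\gamma<\delta$ in $Z$; as $(\gamma,\delta,j)\in d``[Y]^2$, there is a pair $\alpha<\beta$ in $Y$ with $d(\alpha,\beta)=(\gamma,\delta,j)$, whence $c(\alpha,\beta)=j$. Thus $c``[Y]^2=\cf(\lambda)$, as required.

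This is admittedly a one-line deduction once Eisworth's map is granted, with all of the work hidden inside that map. So I would also want to sketch a more self-contained route along Shelah's original lines, staying inside the \textit{pcf} framework of the previous section. First I would invoke Fact~\ref{fact22}(1) to fix a cofinal $\Theta\s\reg(\lambda)$ with $\otp(\Theta)=\cf(\lambda)$ and $\mathfrak b(\Theta)=\lambda^+$, write $\langle\lambda_i\mid i<\cf(\lambda)\rangle$ for its increasing enumeration, and then fix a $<^*$-increasing unbounded sequence $\vec f=\langle f_\alpha\mid\alpha<\lambda^+\rangle$ in $\prod\Theta$ exactly as in Proposition~\ref{prop25}. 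The Claim proved there then applies verbatim: for every full-size $Z\s\lambda^+$, the set $I_Z$ of indices $i$ at which cofinally many values $\epsilon<\lambda_i$ are each attained $\lambda^+$-often by $\langle f_\beta(i)\mid\beta\in Z\rangle$ is cofinal in $\cf(\lambda)$. The coloring would record, for a pair $\alpha<\beta$, an index $i<\cf(\lambda)$ read off from the separation behaviour of $f_\alpha$ and $f_\beta$.

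The main obstacle is precisely the one that the detour through Eisworth's map sidesteps: a scale controls its functions only modulo the bounded ideal, so the naive colors ``least index past which $f_\alpha<f_\beta$'' or ``supremum of the domination set $\{i\mid f_\alpha(i)\ge f_\beta(i)\}$'' cannot be forced to equal a prescribed $j<\cf(\lambda)$ on a pair drawn from an adversarial full-size $Y$. The cofinality of $I_Z$ lets one realize arbitrarily large colors but not a fixed target index. Overcoming this is exactly what the pair-covering map $d$ accomplishes: it decouples the choice of the witnessing pair in $Y$ from the choice of the index, letting the third coordinate range freely over $\cf(\lambda)$ while the first two coordinates land in a prescribed $[Z]^2$. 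For this reason I would present the deduction from Eisworth's map as the clean proof and treat the scale computation merely as motivation for why that map is the natural tool.
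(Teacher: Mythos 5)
The paper offers no proof of Fact~\ref{fact42}: it is stated as a black-box citation to Shelah's Conclusion~4.1 of \cite{Sh:355}, so there is no ``paper proof'' to match your argument against. Your first paragraph is formally correct as written: given Eisworth's map $d$ exactly as quoted in the proof of Proposition~\ref{prop25}, projecting onto the third coordinate and using that the stationary set $Z$ contains at least one pair does yield a coloring witnessing $\lambda^+\nrightarrow[\lambda^+]^2_{\cf(\lambda)}$. The deduction is sound.

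The problem is that this is a reduction to a strictly stronger statement rather than a proof. The assertion that the third coordinate of $d``[Y]^2$ exhausts $\cf(\lambda)$ for every full-size $Y$ \emph{is} Fact~\ref{fact42} (plus more), built directly into the statement of Eisworth's theorem; Eisworth's 2013 result is a strengthening of, and is proved by elaborating the same club-guessing machinery as, Shelah's 1994 Conclusion~4.1. So unless you verify that the proof of \cite[Theorem~1]{MR3087058} does not itself pass through Conclusion~4.1 or an equivalent, you have not produced an independent proof --- you have relocated all of the content into a later black box. Your second, ``self-contained'' sketch correctly diagnoses why a pcf scale alone cannot hit a prescribed index $j<\cf(\lambda)$ (a scale only controls behaviour modulo the bounded ideal), but it stops exactly at the point where the real work begins. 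The missing ingredient in Shelah's actual argument is not the pair-covering map but club guessing on $E^{\lambda^+}_{\cf(\lambda)}$: in the language of Section~5 of this paper, one takes the \zfc-provable club-guessing $C$-sequence over $E^{\lambda^+}_{\cf(\lambda)}$, produces $\cf(\lambda)$-many pairwise disjoint positive sets for the associated ideal (e.g.\ by splitting according to cofinalities), and applies the mechanism of Fact~\ref{fact54} with $\theta=\cf(\lambda)$; compare the proof of the Corollary following Fact~\ref{fact54}. Note also that the paper later directs the reader to ``the very proof of Fact~\ref{fact42}'' and ``the particular coloring constructed there'' in connection with Problem~\ref{p8}; your proposal produces no such concrete coloring, since it inherits whatever $d$ happens to be. So: correct as a conditional deduction, but with a genuine gap if offered as a proof of the fact itself.
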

In view of the last two facts, we ask:
\begin{q} Suppose that $\lambda$ is a singular cardinal.   Does $\lambda^+ \nrightarrow [\lambda^+]^2_{\cf(\lambda)^+}$  hold?
\end{q}
Note that $\lambda^+ \nrightarrow [\lambda^+]^2_{\cf(\lambda)^+}$  is equivalent to the syntactically-weaker principle $\U(\lambda^+,2,\cf(\lambda)^+,2)$ of \cite[Definition~1.2]{paper34}.
We do not know whether this equivalency remains true replacing $\cf(\lambda)^+$ by an arbitrary regular cardinal $\theta<\lambda$.

\begin{defn}
Given a coloring  $c:[\lambda^+]^2 \rightarrow \theta$ and a cardinal $\mu\le\lambda$, consider the following notion of forcing $$\mathbb{P}_{c,\mu}:=(\{ x \in [\lambda^+]^{<\mu}\mid c\restriction [x]^2\text{ is constant }\},\supseteq)$$
for adding a large $c$-homogeneous set, so that $c$ will not witness $\lambda^+ \nrightarrow[\lambda^+]^2_\theta$.
\end{defn}
\begin{fact}[{\cite[Theorem~1]{paper13}}]
Suppose that $\lambda$ is a singular cardinal and $\theta\le\lambda^+$ is any cardinal.
If $\lambda^+ \nrightarrow [\lambda^+]^2_\theta$ holds, then it may be witnessed by a coloring $c:[\lambda^+]^2 \rightarrow \theta$
for which $\mathbb{P}_{c,\mu}$  has the $\lambda^+$-cc for every cardinal $\mu$ such that $\lambda^{<\mu}=\lambda$.
\end{fact}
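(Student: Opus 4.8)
The plan is to reduce the chain condition to a purely combinatorial demand on the witness via a $\Delta$-system analysis, and then to manufacture a witness satisfying a rectangular strengthening that annihilates the resulting configuration. Throughout I take $\mu$ infinite (which is the substantive regime), so that the union of two conditions, having size $<\mu$, is again of admissible size; consequently two conditions $x,y$ are compatible exactly when $x\cup y$ is $c$-homogeneous. Now suppose toward a contradiction that $\langle x_\alpha\mid\alpha<\lambda^+\rangle$ is an antichain. Since $|x_\alpha|<\mu$, a first pigeonhole over the (at most $\lambda$-many) colors and the $<\mu$-many possible sizes lets me assume that all $x_\alpha$ carry a common color $i$ and a common size. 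The hypothesis $\lambda^{<\mu}=\lambda$ is used twice: first, $\lambda^\sigma\le\lambda$ for the uniform petal-size $\sigma$, so the $\Delta$-system lemma applies and yields a full-size subfamily with root $r$ and pairwise disjoint petals $s_\alpha:=x_\alpha\setminus r$; and second, since $\lambda^{\cf(\lambda)}>\lambda$ by König's theorem, the demand $\lambda^{<\mu}=\lambda$ forces $\mu\le\cf(\lambda)$, whence $\sigma<\cf(\lambda)$. Thinning once more, I arrange a block sequence, $\sup(r)<\min(s_\alpha)$ and $\sup(s_\alpha)<\min(s_\beta)$ for $\alpha<\beta$.

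As each $r\cup s_\alpha$ is $c$-homogeneous in color $i$, all pairs of $r\cup s_\alpha\cup s_\beta$ receive color $i$ except possibly those in $s_\alpha\times s_\beta$; hence $x_\alpha$ and $x_\beta$ are incompatible iff some cross-pair in $s_\alpha\times s_\beta$ receives a color other than $i$. The whole problem therefore reduces to producing a witness $c$ with the following rectangular property: for every block sequence $\langle s_\alpha\mid\alpha<\lambda^+\rangle$ of pairwise-disjoint sets of size $<\cf(\lambda)$ and every color $\tau<\theta$, there are $\alpha<\beta$ with $c``(s_\alpha\times s_\beta)=\{\tau\}$. Such a pair makes $x_\alpha$ and $x_\beta$ compatible, the desired contradiction. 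A bare witness to $\lambda^+\nrightarrow[\lambda^+]^2_\theta$ does \emph{not} suffice here, since surjectivity of $c\restriction[Y]^2$ only forces the target color onto \emph{some} pair of $Y$, whereas compatibility demands color $i$ on the \emph{entire} rectangle $s_\alpha\times s_\beta$; a single errant cross-pair destroys it. Thus the entire burden is to upgrade the given witness to one enjoying this Shelah-style $\mathrm{Pr}_1$-type property, uniformly for petals of every size below $\cf(\lambda)$.

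To obtain such a witness I would adapt the construction in the proof of Proposition~\ref{prop25}. Using pcf theory (Fact~\ref{fact22}) fix a cofinal $\Theta=\langle\lambda_i\mid i<\cf(\lambda)\rangle\s\reg(\lambda)$ carrying a scale $\vec f=\langle f_\alpha\mid\alpha<\lambda^+\rangle$, a $<^*$-increasing cofinal sequence in $\prod\Theta$, and fix Eisworth's map $d:[\lambda^+]^2\rightarrow[\lambda^+]^2\times\cf(\lambda)$ from \cite{MR3087058}, whose defining feature is that every full-size $Y$ admits a stationary $Z$ with $d``[Y]^2\supseteq[Z]^2\times\cf(\lambda)$. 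From the given witness $c_0$ one then defines $c$ by reading the color of a pair at the scale-coordinate $i$ dictated by the $\cf(\lambda)$-component of $d$. Two features are used in tandem: the map $d$ transports a rectangle of pairs into a square $[Z]^2$ on which $c_0$ is surjective, which secures the witness property exactly as in Proposition~\ref{prop25}; and the scale, since each petal has size $<\cf(\lambda)$, concentrates a whole petal at a single value on a final segment of coordinates, which is what permits an entire rectangle $s_\alpha\times s_\beta$, rather than merely one of its pairs, to be steered to the prescribed color $\tau$.

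I expect the rectangular verification to be the main obstacle. Surjectivity needs the target hit on one freely chosen pair, whereas the rectangular property must be witnessed simultaneously at each of the $<\cf(\lambda)$-many coordinates making up a petal. Making the coordinate selection coherent across an entire rectangle, so that a whole block of $d$-values concentrates on one index $i$ at which the base coloring is surjective on the relevant $f$-values, is precisely where the singular structure must be exploited; here the regularity of $\cf(\lambda)$ and the ``cofinally many good coordinates'' phenomenon isolated in the Claim within the proof of Proposition~\ref{prop25} should do the work. Confirming that the manufactured $c$ remains a genuine witness while acquiring the rectangular property is the delicate heart of the argument, and the step I would expect to consume most of the effort.
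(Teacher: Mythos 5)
This Fact is quoted in the survey from \cite{paper13} without proof, so there is no in-paper argument to measure you against; what follows assesses your proposal on its own terms.

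Your reduction is the right one and, as far as it goes, essentially correct: restricting to infinite $\mu$ is harmless (for finite $\mu$ any $\lambda^+$ many pairwise disjoint two-element conditions already form an antichain, so the statement must be read with $\mu$ infinite); compatibility of two conditions is homogeneity of their union; the hypothesis $\lambda^{<\mu}=\lambda$ both licenses the generalized $\Delta$-system lemma and, via K\H{o}nig's theorem, forces $\mu\le\cf(\lambda)$; and after the root is absorbed, the $\lambda^+$-cc of $\mathbb{P}_{c,\mu}$ becomes exactly the assertion that $c$ witnesses $\mathrm{Pr}_1(\lambda^+,\lambda^+,\theta,\cf(\lambda))$, i.e., your rectangular property. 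Two caveats: your pigeonhole onto a single homogeneous color needs $\theta\le\lambda$, and indeed for $\theta=\lambda^+$ any surjective coloring admits $\lambda^+$ many pairs realizing pairwise distinct colors, which are automatically pairwise incompatible; so the edge case $\theta=\lambda^+$ requires a separate gloss (e.g., a stretching argument performed \emph{after} securing the chain condition for $\theta=\lambda$). Neither caveat affects the substance.

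The genuine gap is that the entire mathematical content of the cited theorem is the step you defer to the last two paragraphs: upgrading an arbitrary witness of $\lambda^+\nrightarrow[\lambda^+]^2_\theta$ to one with the rectangular property, uniformly for blocks of every size below $\cf(\lambda)$. The proposed adaptation of Proposition~\ref{prop25} does not deliver this as described. Eisworth's map $d$ guarantees that $d``[Y]^2$ covers $[Z]^2\times\cf(\lambda)$, i.e., it realizes any single prescribed triple $(\gamma,\delta,i)$ as the $d$-value of \emph{some one} pair from $Y$; to monochromatize an entire rectangle $s_\alpha\times s_\beta$ you must simultaneously control all $|s_\alpha|\cdot|s_\beta|$ cross-pairs --- they must all be routed to the same coordinate $i$ and to pairs of $f$-values on which $c_i$ takes the single value $\tau$ --- and nothing in the covering property of $d$ or in the $<^*$-increasing scale forces this coherence across a rectangle. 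What is needed is a transformation carrying \emph{rectangles} of small blocks onto \emph{squares} $[Z]^2$ with $Z$ stationary; that is precisely the ``transforming rectangles into squares'' theorem of \cite{paper13}, built from club-guessing on $E^{\lambda^+}_{\cf(\lambda)}$ interleaved with a scale, and it is considerably more delicate than the one-pair-at-a-time argument of Proposition~\ref{prop25}. You have correctly located where the difficulty lives, but the proposal stops at the point where the proof of the cited theorem begins.
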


This ensures that cardinals above $\lambda$ will not be collapsed. But what about the ones from below? Here, we have some bad news
indicating that $\mathbb{P}_{c,\mu}$ is not the right poset for this task.
\begin{fact}[{\cite[Proposition 2.10 and 2.12]{paper45}}]
Suppose that $c:[\lambda^+]^2 \rightarrow 2$ is a coloring  for some singular cardinal $\lambda$. Then:
\begin{itemize}
\item  $\mathbb{P}_{c,\lambda}$ has an antichain of size $\lambda^+$ consisting of pairwise disjoint sets;

\item If $\lambda$ is the limit of strongly compacts, then the above is true already for $\mathbb{P}_{c,\cf(\lambda)^+}$.
\end{itemize}
\end{fact}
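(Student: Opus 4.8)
The plan is to strip both assertions down to a single combinatorial core and then to build the antichains by recursion of length $\lambda^+$. Note first that two conditions $x,y\in\mathbb P_{c,\mu}$ are compatible if and only if $x\cup y$ is again $c$-homogeneous (its size is automatically $<\mu$), so that a \emph{pairwise disjoint antichain} is precisely a family $\langle x_\eta\mid\eta<\lambda^+\rangle$ of pairwise disjoint $c$-homogeneous members of $[\lambda^+]^{<\mu}$ for which $x_\eta\cup x_{\eta'}$ takes both colors whenever $\eta\neq\eta'$. Such a family can exist only when $c$ is a genuine witness to $\lambda^+\nrightarrow[\lambda^+]^2_2$ --- for a constant coloring $\mathbb P_{c,\lambda}$ is downward directed --- so I read the hypothesis as asserting that every $Y\in[\lambda^+]^{\lambda^+}$ is $c$-omnichromatic, and I would immediately upgrade this to the statement that every such $Y$ realizes each color $\lambda^+$-often: were only $\le\lambda$ pairs of $Y$ to receive a fixed color, deleting the $\le\lambda$ endpoints of those pairs would leave a homogeneous set of size $\lambda^+$ in the other color, contradicting the witnessing.

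For the first bullet I would recursively produce pairwise disjoint, pairwise incompatible \emph{homogeneous pairs}. At stage $\eta$ the set $A_\eta$ of ordinals used so far has size $\le\lambda$, so $Y_\eta:=\lambda^+\setminus A_\eta$ has full size and disjointness is free; the only task is to adjoin a homogeneous pair $x\s Y_\eta$ incompatible with all earlier ones. The guiding observation is that a color-$i$ pair is \emph{automatically} incompatible with every earlier color-$(1-i)$ pair, since their union already exhibits both colors; hence only same-color conflicts must be defeated. Writing $B_q$ for the set of ordinals joined to all of $q$ by the color of $q$, a color-$i$ pair $x$ fails to be incompatible with a color-$i$ condition $q$ precisely when $x\s B_q$. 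Consequently it suffices to find, for one of the two colors $i$, a single $\alpha\in Y_\eta$ lying \emph{outside} $B_q$ for every earlier color-$i$ condition $q$ (that is, joined to each such $q$ by the opposite color), together with a color-$i$ mate $\beta\in Y_\eta$; the latter exists for all but a small monochromatic set of $\alpha$'s by the $\lambda^+$-often property.

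The heart of the matter, and the step I expect to be the main obstacle, is to show that this extension never fails --- equivalently, that a \emph{maximal} pairwise disjoint, pairwise incompatible homogeneous family has size $\lambda^+$. Failure is sharp: if neither color admits a suitable $\alpha$, then for each $i$ every color-$i$ pair of $Y_\eta$ lies inside some $B_q$, and pigeonholing the $\le\lambda$ earlier conditions against the $\lambda^+$-many pairs yields a \emph{fixed} small homogeneous set whose common monochromatic neighborhood has size $\lambda^+$. Ruling out this configuration is exactly where the singularity of $\lambda$ must enter: I would feed the $\lambda^+$-sized neighborhood back into the witnessing hypothesis and iterate the absorption alternately through the two colors, using a cofinal sequence $\langle\lambda_i\mid i<\cf(\lambda)\rangle$ to bound the length of such an absorption chain and thereby manufacture a monochromatic set of size $\lambda^+$, a contradiction. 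Making this bookkeeping rigorous --- keeping the accumulated witnesses simultaneously homogeneous and of size $<\lambda$ --- is the delicate point of the whole argument.

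For the second bullet the conditions have size only $\le\cf(\lambda)$, so two of them span very few pairs and the automatic incompatibility used above is no longer available; this is precisely what the strongly compact cardinals below $\lambda$ are for. I would run the same recursion, but with the homogeneous pair replaced by an \emph{ultrafilter-homogeneous} subset of $Y_\eta$ of size $\cf(\lambda)$, extracted via a $\kappa$-complete fine ultrafilter for a strongly compact $\kappa\in(\cf(\lambda),\lambda)$; the completeness is what lets one compress many ``defeat'' requirements into a single homogeneous set of size $\cf(\lambda)$ and so obtain incompatibility already inside $\mathbb P_{c,\cf(\lambda)^+}$. The main obstacle here is the interface between ultrafilter-homogeneity and the size constraint: one must guarantee that the extracted set has size exactly $\le\cf(\lambda)$ while still meeting every earlier obstruction, and since there may be up to $\lambda>\kappa$ earlier conditions this forces one to exploit the full cofinal family of strongly compacts below $\lambda$ rather than any single one.
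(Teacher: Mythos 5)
Your framing is fine as far as it goes: compatibility in both posets does reduce to homogeneity of the union, the statement does need the implicit hypothesis that $c$ witnesses $\lambda^+\nrightarrow[\lambda^+]^2_2$ (otherwise a constant coloring makes the poset directed), the upgrade to ``each colour appears $\lambda^+$-often in every full-size set'' is correct, and so are your two observations (automatic incompatibility across colours for conditions of size at least $2$, and the characterization of same-colour compatibility via the sets $B_q$). The problem is the step you yourself flag as the heart of the matter, and it is not a bookkeeping issue: it is false. You propose to realize the first bullet by an antichain of \emph{pairs}. A pairwise disjoint family of pairwise incompatible homogeneous pairs is literally an antichain of $\mathbb{P}_{c,\aleph_0}$ (the union of two pairs is finite, so incompatibility there is again just non-homogeneity of the union). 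But the Fact from \cite{paper13} quoted two paragraphs earlier in this very section supplies a witness $c:[\lambda^+]^2\rightarrow2$ to $\lambda^+\nrightarrow[\lambda^+]^2_2$ for which $\mathbb{P}_{c,\mu}$ has the $\lambda^+$-cc for every $\mu$ with $\lambda^{<\mu}=\lambda$, in particular for $\mu=\aleph_0$. For such a $c$ there is no $\lambda^+$-sized antichain of pairs at all, so your recursion must halt before reaching $\lambda^+$; the ``failure configuration'' you isolate (an earlier pair $q$ whose common monochromatic neighbourhood meets $Y_\eta$ in a full-size set) is genuinely realizable, and the absorption argument you defer to the end cannot exist.

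This tension is in fact the whole point of the statement: among any $\lambda^+$-many members of the antichain of the first bullet, the sizes cannot all be bounded by a $\mu$ with $\lambda^{<\mu}=\lambda$, and that is exactly why capping the sizes at $\cf(\lambda)$ in the second bullet costs the large-cardinal hypothesis. Any correct argument must therefore defeat the (up to) $\lambda$-many earlier same-colour conditions by a mechanism other than dedicating one point of the new condition to each --- for instance by arranging that each condition $x$ placed into the antichain has a common monochromatic neighbourhood $B_x$ of size at most $\lambda$ inside the ambient set, so that all later conditions kill it by outright avoidance; producing such an $x$ of size $<\lambda$ (resp.\ of size $\le\cf(\lambda)$, using the $\kappa$-complete ultrafilters you mention, which can indeed be taken to concentrate on any prescribed full-size set) inside every full-size set is where the real work lies. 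Your sketch of the second bullet inherits the same gap: you never say how a single condition of size $\cf(\lambda)$ meets $\lambda$-many ``defeat'' requirements, and $\kappa$-completeness of one ultrafilter only compresses $<\kappa$-many of them.
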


\begin{q}\label{p8}
Given a coloring  $c:[\lambda^+]^2 \rightarrow \theta$ witnessing $\lambda^+ \nrightarrow[\lambda^+]^2_\theta$,
is there a cofinality-preserving notion of forcing for killing $c$?
Identify additional features of $c$ that would enable an affirmative answer.
\end{q}

To demonstrate the difficulty in solving Problem~\ref{p8}, examine the very proof of Fact~\ref{fact42}
and see what it takes to kill the particular coloring constructed there.

\medskip

The next reduction tells us that we may focus our attention on sets thicker than just cofinal.

\begin{fact}[{\cite[Theorem~2]{paper13}}]  Suppose that $\lambda$ is a singular cardinal.
If there are a cardinal $\mu<\lambda$ and a coloring $c:[\lambda^+]^2 \rightarrow \theta$ such that  $c``[S]^2=\theta$  for every \emph{stationary}  $S \s E^{\lambda^+}_{>\mu}$,
then $\lambda^+ \nrightarrow [\lambda^+]^2_\theta$ holds.
\end{fact}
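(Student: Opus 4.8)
The plan is to reduce the problem to the construction of a single \emph{reduction map} and then feed the hypothesis through it as a black box. Concretely, I would aim to build a map $e:[\lambda^+]^2\rightarrow[E^{\lambda^+}_{>\mu}]^2$ with the property that for every $Y\s\lambda^+$ of full size there is a stationary $S\s E^{\lambda^+}_{>\mu}$ such that $[S]^2\s e``[Y]^2$. Granting such an $e$, the coloring $c':=c\circ e$ witnesses $\lambda^+\nrightarrow[\lambda^+]^2_\theta$: given a full-size $Y$ and a prescribed color $\tau<\theta$, fix the stationary $S\s E^{\lambda^+}_{>\mu}$ supplied by $e$; since $c``[S]^2=\theta$ by hypothesis, there is a pair $(\delta,\delta')\in[S]^2$ with $c(\delta,\delta')=\tau$, and as $(\delta,\delta')\in e``[Y]^2$ we may pick $(\alpha,\beta)\in[Y]^2$ with $e(\alpha,\beta)=(\delta,\delta')$, whence $c'(\alpha,\beta)=\tau$. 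Thus all of the combinatorial weight is shifted onto $e$, and $c$ enters only through this final line.

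Before constructing $e$ I would first collapse the cofinality bookkeeping to a single value. Since $\lambda$ is singular it is a limit cardinal, so $\nu:=\mu^+$ is a regular cardinal with $\mu<\nu<\lambda$ and $E^{\lambda^+}_\nu\s E^{\lambda^+}_{>\mu}$; in particular every stationary $S\s E^{\lambda^+}_\nu$ is a legitimate instance of the hypothesis. It therefore suffices to produce $e$ landing in $[E^{\lambda^+}_\nu]^2$ whose image over $[Y]^2$ covers $[S]^2$ for some stationary $S\s E^{\lambda^+}_\nu$. This is convenient because for full-size $Y$ the set $D_Y:=\{\delta<\lambda^+\mid\sup(Y\cap\delta)=\delta>0\}$ is a club, so $S_Y:=D_Y\cap E^{\lambda^+}_\nu$ is stationary, of fixed uncountable cofinality $\nu$, and every $\delta\in S_Y$ is approached by points of $Y$; these are exactly the points I want to realize as outputs of $e$.

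The construction of $e$ is the heart of the matter, and I would carry it out via minimal walks along a fixed $C$-sequence $\langle C_\delta\mid\delta<\lambda^+\rangle$ chosen so that $\langle C_\delta\mid\delta\in E^{\lambda^+}_\nu\rangle$ guesses clubs, declaring $e(\alpha,\beta)$ to be a canonical pair of high-cofinality steps of the walk from $\beta$ down to $\alpha$ (with a fixed default pair in $E^{\lambda^+}_\nu$ whenever no such steps exist). The covering property would then follow the usual template: given a target $(\delta,\delta')\in[S_Y]^2$, use club-guessing to locate $\alpha,\beta\in Y$ whose walk passes through $\delta$ and $\delta'$ as its designated steps, so that $e(\alpha,\beta)=(\delta,\delta')$. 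I expect the main obstacle to be precisely this realization step together with the requirement that the recovered points lie in $E^{\lambda^+}_\nu$: a generic $\delta\in S_Y$ is itself a limit of the $C$-sequence, so a naive ``round $\alpha$ up to the next accumulation point'' projection fails, and one must instead exploit the coherence of the walk to recover $\delta,\delta'$ as intermediate points strictly between $\alpha$ and $\beta$. Controlling the cofinality of the targets is unavoidable exactly when $\mu\ge\cf(\lambda)$, where one cannot fall back on a pcf scale living on $E^{\lambda^+}_{\cf(\lambda)}$ and must run the guessing on $E^{\lambda^+}_\nu$ directly; alternatively, one may hope to invoke the reduction map of \cite[Theorem~1]{MR3087058} in a form that already targets the prescribed cofinality class, in which case $e$ is obtained off the shelf and only the verification of the displayed computation remains.
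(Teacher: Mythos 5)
Note first that the survey does not actually prove this Fact; it is quoted from \cite[Theorem~2]{paper13}, so the comparison must be with the proof given there. Your opening reduction is correct and is indeed how the cited proof is organized: granted a map $e:[\lambda^+]^2\rightarrow[E^{\lambda^+}_{>\mu}]^2$ such that every full-size $Y\s\lambda^+$ admits a stationary $S\s E^{\lambda^+}_{>\mu}$ with $[S]^2\s e``[Y]^2$, the composition $c\circ e$ witnesses $\lambda^+\nrightarrow[\lambda^+]^2_\theta$, and your verification of that implication is complete. The restriction to the single cofinality class $E^{\lambda^+}_{\mu^+}$ is also harmless.

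The gap is that the existence of such an $e$ \emph{is} the theorem: it is the main technical result of \cite{paper13} (the ``transformation of rectangles into squares''), and your proposal stops exactly where that work begins. The sketch via minimal walks over a club-guessing $C$-sequence on $E^{\lambda^+}_{\mu^+}$ does not go through as described: club guessing lets you steer a walk from a single $\beta$ through a neighbourhood of a single prescribed $\delta$, but here you must simultaneously realize an arbitrary pair $(\delta,\delta')\in[S]^2$ as designated steps of one walk between two points $\alpha<\beta$ both drawn from $Y$, and you must do so for all pairs from some stationary set. The actual construction requires, in addition to club guessing, a scale $\langle f_\alpha\mid\alpha<\lambda^+\rangle$ in some $\prod\Theta$ with $\Theta$ cofinal in $\reg(\lambda)$ and a delicate analysis of the last steps of the walks in order to control the cofinalities of the recovered points --- precisely the obstacle you flag and then defer. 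Your proposed fallback, invoking \cite[Theorem~1]{MR3087058} ``in a form that already targets the prescribed cofinality class,'' is not available off the shelf: as quoted in Section~4 of this survey, Eisworth's map $d$ produces a stationary $Z\s\lambda^+$ with no control over the cofinalities of its elements, so it cannot by itself handle an arbitrary $\mu<\lambda$ (in particular $\mu\ge\cf(\lambda)$), which is the case you yourself identify as unavoidable. As written, the proposal is a correct restatement of the problem together with an honest list of the difficulties, not a proof.
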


\begin{q}  Identify interesting ideals $J$ over $\lambda^+$ for which
$\zfc$ proves the existence of a coloring $c:[\lambda^+]^2 \rightarrow \lambda$ satisfying  $c``[B]^2=\lambda$ for every $B\in J^+$.
\end{q}

The next reduction tells us that instead of implementing prescribed colors as $c(\alpha,\beta)$ for some pair $\alpha<\beta$ of ordinals from a given large set $Y$,
it suffices to do so as $c(i,\beta)$ where only $\beta$ is required to come from $Y$.

\begin{fact}[{\cite[Lemma~8.9(2)]{paper47}}]
Suppose a singular cardinal $\lambda$ is a strong limit or satisfies $\aleph_\lambda>\lambda$.
If there exists a coloring $c:\lambda\times \lambda^+ \rightarrow \lambda$ such that for every $Y \s \lambda^+$ of full size,
there is $i<\lambda$ with $c[\{i\}\times Y]=\lambda$, then $\lambda^+ \nrightarrow [\lambda^+]^2_\lambda$ holds.
\end{fact}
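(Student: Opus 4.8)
The plan is to read this as a \emph{rectangles-into-squares} transformation: the hypothesis hands us a coloring $c$ that is onto $\lambda$ along a single ``column'' $\{i\}\times Y$, and we must manufacture a genuinely symmetric coloring $d:[\lambda^+]^2\rightarrow\lambda$ that is onto $\lambda$ on every $[Y]^2$. By the stretching argument recalled earlier in the paper it suffices to secure $\lambda^+\nrightarrow[\lambda^+]^2_\lambda$, so throughout I aim to realize an arbitrary prescribed $\tau<\lambda$ as a value $c(i,\beta)$ with $\beta$ drawn from the full-size set under consideration and $i$ a good column index supplied by the hypothesis. The whole difficulty is thus to arrange that, from a pair $(\alpha,\beta)$ taken from a full-size $Y$, one can simultaneously \emph{recover} a column index equal to a good index for a suitable full-size set and \emph{feed} $c$ a second coordinate ranging over that same set.

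First I would record a structural feature of $c$ forced by the hypothesis. For a full-size $Z\s\lambda^+$ write $G_Z:=\{i<\lambda\mid c[\{i\}\times Z]=\lambda\}$ for its set of good indices. If $W\s Z$ is full-size then $c[\{i\}\times W]\s c[\{i\}\times Z]$, so $G_W\s G_Z$, while $G_W\neq\varnothing$ by hypothesis. Since any $g:Z\to\lambda$ is constant on a full-size fiber (pigeonhole, as $\lambda\cdot\lambda=\lambda<\lambda^+$), no single index can be onto on \emph{every} full-size subset of $Z$; consequently $G_Z$ is no singleton, and iterating ``pass to a full-size fiber of $c(i,\cdot)$'' strictly shrinks the (nonempty) good-index set. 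The point of this paragraph is to upgrade the bare ``$G_Z\neq\varnothing$'' into ``$G_Z$ is large,'' and this is exactly where the arithmetic hypothesis \emph{$\lambda$ strong limit or $\aleph_\lambda>\lambda$} should be spent: it is what allows the shrinking iteration to be carried through its limit stages with the running intersections remaining full-size, so as to rule out $|G_Z|<\lambda$ and, in the form I want, to guarantee that $G_Z$ meets the modest $\lambda$-sized set of values that the transformation below can actually produce.

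With largeness of $G_Z$ in hand, I would assemble $d$ along the lines of Proposition~\ref{prop25}. Fix, via Fact~\ref{fact22}, a cofinal $\Theta=\{\lambda_i\mid i<\cf(\lambda)\}\s\reg(\lambda)$ carrying a $<^*$-increasing cofinal scale $\langle f_\beta\mid\beta<\lambda^+\rangle\in\prod\Theta$; the Claim inside the proof of Proposition~\ref{prop25} then tells us that on any full-size $Z$, cofinally many coordinates $i$ have their cofinally-attained values cofinal in $\lambda_i$, so the scale realizes a $\lambda$-sized set of values inside $Z$. Next, using \cite[Theorem~1]{MR3087058} exactly as in Proposition~\ref{prop25}, fix a map $e:[\lambda^+]^2\rightarrow[\lambda^+]^2\times\cf(\lambda)$ such that for every full-size $Y$ there is a stationary $Z$ with $e``[Y]^2\supseteq[Z]^2\times\cf(\lambda)$. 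Define $d(\alpha,\beta):=c\bigl(\,v(\gamma,\delta,i),\,\delta\,\bigr)$ whenever $e(\alpha,\beta)=(\gamma,\delta,i)$, where $v(\gamma,\delta,i)<\lambda$ packages the scale-value $f_\gamma(i)$ together with $i$ via a fixed bijection folding $\{(i,\epsilon)\mid\epsilon<\lambda_i\}$ onto $\lambda$. To verify, given full-size $Y$ and target $\tau$, reflect to $Z$, choose a good index $i^\ast\in G_Z$ lying in the $\lambda$-sized realizable set (possible by the previous paragraph), pick $\delta\in Z$ with $c(i^\ast,\delta)=\tau$, then locate $\gamma<\delta$ in $Z$ and $i<\cf(\lambda)$ with $v(\gamma,\delta,i)=i^\ast$ (the realizable values being attained cofinally in $Z$); any pair $(\alpha,\beta)\in[Y]^2$ with $e(\alpha,\beta)=(\gamma,\delta,i)$ then satisfies $d(\alpha,\beta)=c(i^\ast,\delta)=\tau$.

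I expect the main obstacle to be precisely the largeness step of the second paragraph. The bare hypothesis promises only \emph{one} good index per full-size set, and an elementary counting shows that no fixed, $\zfc$-definable gadget can name an arbitrary such index uniformly across all full-size sets; so the surjectivity must come from matching a good index against the values the scale can produce, and making that match always available is the genuine non-triviality that consumes the hypothesis ``$\lambda$ strong limit or $\aleph_\lambda>\lambda$.'' Keeping the limit stages of the fiber-shrinking argument from collapsing — equivalently, controlling the relevant $\cf(\lambda)$- and $\lambda$-indexed families of full-size sets so that their intersections stay full-size — is the delicate heart of the matter, and I expect the dichotomy between the two arithmetic hypotheses to surface exactly there: a strong-limit bound on $2^{<\lambda}$ on the one hand, versus the paucity of cardinals below a non-$\aleph$-fixed-point $\lambda$ on the other, furnishing two routes to the same closure.
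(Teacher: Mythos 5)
This Fact is quoted in the survey without proof (it is imported from \cite[Lemma~8.9(2)]{paper47}), so there is nothing in the present paper to compare your argument against; judged on its own terms, your proposal is a plan with an unclosed hole at exactly the step you flag as ``the delicate heart of the matter''. The verification at the end of your third paragraph needs a column index $i^\ast$ that is simultaneously (a) \emph{realizable}, i.e.\ of the form $v(\gamma,\delta,i)=\mathrm{code}(i,f_\gamma(i))$ for cofinally many $\gamma\in Z$, and (b) \emph{good} for (a tail of) $Z$, i.e.\ $\tau\in c[\{i^\ast\}\times Z']$. The realizable indices form one particular $\lambda$-sized subset $R_Z\s\lambda$ determined by the scale and the fixed pairing bijection, while the hypothesis on $c$ only guarantees that the set $G_{Z'}$ of good indices is nonempty, with no control whatsoever over where in $\lambda$ it sits. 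Two $\lambda$-sized subsets of $\lambda$ can be disjoint, so even if you did establish $|G_{Z'}|=\lambda$, that cardinality statement cannot produce $G_{Z'}\cap R_Z\neq\varnothing$. What you would actually need is something structural --- e.g.\ that every full-size set admits a full-size subset whose good indices are forced into a set you can name in advance --- and nothing in the proposal supplies that.

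The largeness argument of your second paragraph also does not deliver what it promises. The finite-stage shrinking (intersect finitely many fibers, which stays full-size by pigeonhole since the type map has range of size $\le\lambda^{|F|}=\lambda$ for finite $F$) only shows $G_Z$ is infinite. To pass a limit stage you would need a decreasing $\omega$-sequence of $\lambda^+$-sized sets to have $\lambda^+$-sized intersection, which is simply false (partition $\lambda^+$ into $\omega$ full-size pieces and take the tails of the union), and the fibers here are dictated by $c$, so no arithmetic hypothesis on $\lambda$ can rescue the nesting; the non-nested variant, handling an infinite $F\s\lambda$ at once, requires the type map $\beta\mapsto c\restriction(F\times\{\beta\})$ to have range of size $\le\lambda$, i.e.\ $\lambda^{|F|}\le\lambda$, which already fails for $|F|=\aleph_0$ when $\cf(\lambda)=\omega$ --- and by Corollary~\ref{cor24}(2) the countable-cofinality case is precisely the one that matters. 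So neither half of your intended use of ``$\lambda$ strong limit or $\aleph_\lambda>\lambda$'' is realized: the hypothesis must be spent on building a device that recovers a \emph{guaranteed-good} column index from a pair in $Y$, not on inflating $|G_Z|$. The surrounding scaffolding (stretching from $\lambda$ to $\lambda^+$ colors, Eisworth's map from \cite{MR3087058}, the scale and the claim from Proposition~\ref{prop25}) is all sound and plausibly part of a correct proof, but the proposal as written does not prove the Fact.
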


Unfortunately, in \cite{paper53} it was shown that in the model of \cite{GaSh:949}
there is no such coloring $c:\lambda\times\lambda^+\rightarrow\lambda$.
But we do get two approximations of such a coloring in $\zfc$:

\begin{fact}[{\cite[\S6]{paper53}}] Suppose that $\lambda$ is a singular cardinal.
\begin{enumerate}
\item  There is a coloring $c:\lambda\times \lambda^+ \rightarrow \lambda$ such that for every $Y \s \lambda^+$ of full size,
there is $i<\lambda$ with  $\otp(c[\{i\}\times Y])=\lambda$;
\item For every cardinal $\theta<\lambda$,
there is a coloring $c:\lambda\times \lambda^+ \rightarrow \theta$  such that for every $Y \s \lambda^+$ of full size,
there is $i<\lambda$ with $c[\{i\}\times Y]=\theta$.
\end{enumerate}
\end{fact}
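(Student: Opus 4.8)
The plan is to build both colorings from a pcf scale, reusing the ``standard feature'' isolated inside the proof of Proposition~\ref{prop25}. First, using Fact~\ref{fact22} I would fix a cofinal set $\Theta\s\reg(\lambda)$ together with a $<^*$-increasing cofinal sequence $\vec f=\langle f_\alpha\mid\alpha<\lambda^+\rangle$ in $\prod\Theta$; write $\langle\lambda_i\mid i<\cf(\lambda)\rangle$ for the increasing enumeration of $\Theta$, and, since $\lambda$ is a limit cardinal, arrange (by standard pcf bookkeeping) that every $\lambda_i$ is a successor of a regular cardinal, so that $\lambda_i\nrightarrow[\lambda_i]^2_\theta$ holds for every $\theta<\lambda_i$. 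The crucial input is that for every $Y\s\lambda^+$ of full size, the set of \emph{good} coordinates $i$ --- those for which $E_i:=\{\epsilon<\lambda_i\mid |\{\beta\in Y\mid f_\beta(i)=\epsilon\}|=\lambda^+\}$ is cofinal in $\lambda_i$ --- is itself cofinal in $\cf(\lambda)$; this is exactly the Claim established there. Fixing once and for all a bijection $\lambda\times\lambda\leftrightarrow\lambda$, I shall index the first coordinate of $c$ by pairs, so that an index $i<\lambda$ may encode both a scale-coordinate $j<\cf(\lambda)$ and an ordinal below $\lambda_j$.

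For item~(2), fix $\theta<\lambda$. The plan is to reduce to a one-dimensional surjectivity phenomenon on a single $\lambda_j$. On each $\lambda_j>\theta$ I would use a coloring $d_j:[\lambda_j]^2\to\theta$ with the \emph{vertical-line} property: for every cofinal $V\s\lambda_j$ there is $\delta<\lambda_j$ with $d_j[\{\delta\}\times(V\setminus(\delta+1))]=\theta$. Granting this, define $c$ on the pair-index $(j,\delta)$ by $c((j,\delta),\beta):=d_j(\delta,f_\beta(j))$ whenever $\delta<f_\beta(j)$, and $0$ otherwise. Now, given $Y$ of full size, pick a good $j$ with $\lambda_j>\theta$; then $V:=\{f_\beta(j)\mid\beta\in Y\}\supseteq E_j$ is cofinal in $\lambda_j$, the vertical-line property yields a $\delta$ with $d_j[\{\delta\}\times(V\setminus(\delta+1))]=\theta$, and unravelling the definitions gives $c[\{(j,\delta)\}\times Y]=\theta$, as required.

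For item~(1) the codomain is $\lambda$, and since any single coordinate $j$ contributes values only in $\lambda_j<\lambda$, a good row must instead sweep across cofinally many coordinates. The plan is to fix a partition $\lambda=\bigsqcup_{k<\cf(\lambda)}I_k$ into consecutive blocks with $\otp(I_k)=\lambda_k$, together with order-isomorphisms $e_k:\lambda_k\to I_k$, and to arrange that the good row deposits, for cofinally many $k$, a cofinal subset of $I_k$ --- which forces the image to have order type $\lambda$. Concretely, I would use on a good coordinate $j$ a \emph{block-selecting} vertical-line coloring $d_j:[\lambda_j]^2\to\cf(\lambda)$ to route each $\beta$ to a block $k=d_j(\delta,f_\beta(j))$, and record there the value $e_k(f_\beta(k))$; the point is that a single $\delta$ can make $\{d_j(\delta,f_\beta(j))\mid\beta\in Y\}$ cofinal in $\cf(\lambda)$, so that cofinally many blocks are targeted, while the full-size fibers supplied by good coordinates let one invoke the standard feature a second time, inside each fiber, to guarantee that the recorded values are cofinal in the relevant $\lambda_k$.

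Two ingredients carry the argument, and both are where the difficulty lies. The first is the existence of the vertical-line colorings $d_j$; for $\lambda_j$ a successor of a regular cardinal these should be part of the strong-coloring technology underlying $\lambda_j\nrightarrow[\lambda_j]^2_{\lambda_j}$ (walks on ordinals and the $\U$-principles of \cite{paper34}), and I expect isolating the precise vertical-line form to be routine but not entirely trivial. The genuinely hard part is the bookkeeping in~(1): the block $k$ selected by $d_j$ is chosen before $Y$ is seen, whereas the coordinate at which $\beta$'s fiber is \emph{good} depends on $Y$, so the ``selected block'' and the ``coordinate whose values are cofinal'' need not coincide. Overcoming this misalignment --- so that cofinally many targeted blocks are actually filled cofinally by a single index --- is the main obstacle, and I would address it by defining the selection and the value-extraction coherently along an increasing path of coordinates (a walk on the scale), rather than independently, so that targeting block $k$ automatically reads the scale at a coordinate good for the corresponding fiber.
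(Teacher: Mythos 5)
A preliminary remark: the survey does not prove this Fact but imports it from \cite[\S6]{paper53}, so there is no in-paper proof to compare against; your argument has to stand on its own. Your overall strategy --- rerun the scale machinery from the proof of Proposition~\ref{prop25} and let the first coordinate of $c$ encode a pair $(j,\delta)$ with $\delta<\lambda_j$ --- is reasonable, and the reduction in item~(2) is formally correct \emph{granted} the ``vertical-line'' colorings $d_j$. But that is exactly where the gap is. The vertical-line property (a single $\delta$ with $d_j[\{\delta\}\times(V\setminus(\delta+1))]=\theta$ for every cofinal $V\s\lambda_j$) does not follow routinely from $\lambda_j\nrightarrow[\lambda_j]^2_{\lambda_j}$: the square-bracket relation produces, for each colour $\tau$, a pair in $V$ realizing $\tau$, but with the first coordinate depending on $\tau$; collapsing these to one $\delta$ realizing all colours simultaneously is precisely the content of the ``onto''-type principles and rectangle-to-square transformations of \cite{paper13,paper45,paper47}, and it is \emph{not} a \zfc{} theorem for arbitrary regular $\lambda_j$ and arbitrary $\theta<\lambda_j$ (for instance, if $\lambda_j$ carries a normal ultrafilter, a diagonal-intersection argument produces a cofinal $V$ on which every row of any given $d_j:[\lambda_j]^2\rightarrow 2$ is eventually constant). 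Your fallback --- ``arrange by standard pcf bookkeeping that every $\lambda_i$ is a successor of a regular cardinal'' --- is also unjustified: Fact~\ref{fact22} gives no such control (for countable cofinality it yields only some cofinal $\Theta$; for uncountable cofinality it yields successors of elements of a club, which are typically successors of singulars), and a scale cannot be freely relocated to a prescribed cofinal set of regulars. So item~(2) has been reduced to a statement of essentially the same difficulty one level down, without a proof of that statement.

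For item~(1), first note a simplification you missed: a subset of the ordinal $\lambda$ has order type $\lambda$ iff it has cardinality $\lambda$, so the goal is only that one row's image have full cardinality --- an ``unbounded''-type statement, weaker than ``onto''. Still, as you correctly observe, a row reading a single scale coordinate $j$ contributes at most $\lambda_j<\lambda$ values, so a good row must harvest values from unboundedly many coordinates, and your block/routing scheme founders exactly where you say it does: the block $k=d_j(\delta,f_\beta(j))$ is selected with no reference to $Y$; a fiber $Y_k$ of size $\lambda^+$ does not make $\{f_\beta(k)\mid\beta\in Y_k\}$ large, since a $<^*$-increasing sequence may be constant at any single coordinate along a $\lambda^+$-sized set; and the coordinates at which the Claim makes $Y_k$ ``good'' need not include $k$. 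The closing sentence about defining the selection ``coherently along a walk on the scale'' is a declaration of intent, not an argument. As written, neither clause is established.
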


To appreciate the fact that Clause~(1) holds in $\zfc$ for $\lambda$ singular, note that the same assertion for $\lambda$ regular is equivalent to $\mathfrak{b}_\lambda=\lambda^+$ (see \cite[Lemma~6.1]{paper53}).

\section{Club guessing}
For a set of ordinals $S$, a \emph{$C$-sequence over $S$}
is a sequence $\vec C=\langle C_\delta\mid \delta\in S\rangle$ satisfying that
for every $\delta\in S$, $C_\delta$ is a closed subset of $\delta$ with $\sup(C_\delta)=\sup(\delta)$.
\begin{defn}[Shelah] A $C$-sequence $\vec C=\langle C_\delta\mid \delta\in S\rangle$
over a stationary subset $S\s\kappa$ is said to \emph{guess clubs}
iff for every club $D\s\kappa$, there is some $\delta\in S$ with $C_\delta \s D$.
\end{defn}

By \cite[\S1]{Sh:365}, for every stationary $S\s\kappa$ such that $\sup\{ \cf(\delta)^+\mid \delta\in S\}<\kappa$,
there exists a $C$-sequence over $S$ that guesses clubs.

\begin{defn} A $C$-sequence $\vec C=\langle C_\delta\mid \delta\in S\rangle$
is \emph{uninhibited} iff for club many $\delta\in S$, for every $\mu\in\reg(\delta)$, $\sup(\nacc(C_\delta)\cap E^{\delta}_{\ge\mu})=\delta$.
\end{defn}

\begin{defn}\label{defn53} Given a $C$-sequence $\vec C=\langle C_\delta\mid \delta\in S\rangle$ over some stationary $S\s\kappa$,
consider the following corresponding ideal:
$$J(\vec C):=\{ A \s \kappa\mid \exists\text{club }D \s \kappa\forall \delta\in S\exists\mu\in\reg(\delta)[\sup(\nacc(C_\delta)\cap E^{\delta}_{\ge\mu}\cap D\cap A)<\delta]\}.$$
\end{defn}

By \cite[Claim~2.4]{Sh:365} and \cite[Theorem~2]{EiSh:819}, for every cardinal $\lambda$ of uncountable cofinality,
there exists an uninhibited club-guessing $C$-sequence $\vec C$ over $E^{\lambda^+}_{\cf(\lambda)}$. In particular,
in this case, the corresponding ideal $J(\vec C)$ is a proper ideal.
The following is Question~2.4 of \cite{EiSh:819} and is still open:

\begin{q}\label{p10} Suppose that $\lambda$ is a singular cardinal of countable cofinality.
Must there exist an uninhibited club-guessing $C$-sequence over $E^{\lambda^+}_{\cf(\lambda)}$?
\end{q}

A pump-up result (proved using a simple stretching argument) asserts that if $\lambda$ is not a J{\'o}nsson cardinal, then neither is $\lambda^+$.
In \cite[Lemma~1.9]{Sh:365}, Shelah proved a deep pump-up theorem asserting that  for every singular cardinal $\lambda$,
if there exists a $C$-sequence $\vec C$ over $E^{\lambda^+}_{\cf(\lambda)}$ for which
there exists a $J(\vec C)$-positive subset of $\{ \beta<\lambda^+\mid \cf(\beta) \text{ is not J{\'o}nsson}\}$,
then $\lambda^+$ is not J{\'o}nsson.
\begin{q}
Is $\lambda^+ \rightarrow[\lambda^+]^2_\lambda$ equivalent to the Jónsson-ness of $\lambda^+$?  to  $\lambda^+ \rightarrow[\lambda^+]^n_\lambda$ for some positive integer $n$?
\end{q}

J{\'o}nsson cardinals are known to have tight connections to problems in algebra.
By \cite[Corollary~2.8]{paper27},
$\kappa$ is not  J{\'o}nsson iff
the following strong failure of the higher analog of Hindman's theorem holds true:
for every Abelian group $G$ of size $\kappa$,
there exists a coloring $c:G\rightarrow\kappa$ such that for every $Y\s G$ of full size,
$c\restriction\fs(Y)$ is surjective, where
$\fs(Y)$ stands for the set of all finite sums $y_1+\cdots+y_n$ of distinct elements of $Y$.
A particularly nice witness to a cardinal $\kappa$ not being J{\'o}nsson
is the existence of a \emph{Shelah group} of size $\kappa$, that is, a group $G$ (of size $\kappa$) for which there exists a positive integer $n$
such that for every $Y\s G$  of full size, every element of $G$ may be
written as a group word of length $n$ in the elements of $Y$.
In \cite{Sh:69}, Shelah proved that $2^\lambda=\lambda^+$ entails the existence of a Shelah group of size $\lambda^+$.
In \cite{paper60}, it was shown that the arithmetic hypothesis is redundant in the case that $\lambda$ is regular. Thus, we ask:
\begin{q}  Suppose that $\lambda$ is a singular cardinal such that $\lambda^+$ is not Jónsson. Does there exist a Shelah group of size $\lambda^+$?
\end{q}

Coming back to the context of coloring pairs, we have the following result concerning the ideal of Definition~\ref{defn53}:

\begin{fact}[Eisworth, \cite{MR2506189}]\label{fact54} Suppose that $\lambda$ is a singular cardinal,
$\vec C=\langle C_\delta\mid \delta\in E^{\lambda^+}_{\cf(\lambda)}\rangle$ is a $C$-sequence such that $\otp(C_\delta)<\lambda$ for all $\delta\in E^{\lambda^+}_{\cf(\lambda)}$.
Whenever there are $\theta$-many pairwise disjoint $J(\vec C)$-positive sets,
$\lambda^+\nrightarrow[\lambda^+]^2_\theta$ holds.\footnote{Compare with \cite[Remark~2.10(2)]{Sh:413}.}
\end{fact}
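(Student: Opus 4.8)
The plan is to produce a single coloring $c:[\lambda^+]^2\to\theta$ witnessing $\lambda^+\nrightarrow[\lambda^+]^2_\theta$, fed by a fixed sequence $\langle A_\tau\mid\tau<\theta\rangle$ of pairwise disjoint $J(\vec C)$-positive sets. The disjointness guarantees that membership of an ordinal in $\bigcup_{\tau<\theta}A_\tau$ determines a unique color, so the whole problem reduces to engineering $c$ so that, from any $Y\s\lambda^+$ of full size and any prescribed $\tau<\theta$, one can read the value $\tau$ off some pair in $[Y]^2$ by way of an anchor lying in $A_\tau$. The first obstacle is structural: a full-sized $Y$ may be disjoint from $E^{\lambda^+}_{\cf(\lambda)}$ (e.g.\ when $\cf(\lambda)=\omega$ and $Y\s E^{\lambda^+}_{\omega_1}$), so $c$ cannot simply inspect $C_\beta$ for $\beta\in Y$. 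To let pairs of $Y$ ``reach'' the ordinals in $E^{\lambda^+}_{\cf(\lambda)}$ on which $\vec C$ guesses, I would first extend $\vec C$ to a full $C$-sequence on all limit ordinals below $\lambda^+$ (leaving $C_\delta$ untouched for $\delta\in E^{\lambda^+}_{\cf(\lambda)}$) and work with minimal walks along it.

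Next I would define $c$ through minimal walks: for $\alpha<\beta$, walk from $\beta$ down to $\alpha$ along the extended $C$-sequence and consider the step that descends from the last ordinal $\delta$ on the trace lying in $E^{\lambda^+}_{\cf(\lambda)}$; the walk steps from $\delta$ to $\gamma:=\min(C_\delta\setminus(\alpha+1))$. Declaring $c(\alpha,\beta):=\tau$ when this anchor $\gamma$ lands in $A_\tau$ (and a default value otherwise) ties the color to the guessing data. Here the hypothesis $\otp(C_\delta)<\lambda$ serves to keep all ordinals indexing the construction below $\lambda$, which is what makes the finitely-many-steps bookkeeping go through uniformly.

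For the verification, given $Y$ of full size and a target $\tau$, set $D:=\acc^+(Y)=\{\gamma<\lambda^+\mid\sup(Y\cap\gamma)=\gamma\}$, a club. $J(\vec C)$-positivity of $A_\tau$ against $D$ furnishes $\delta\in E^{\lambda^+}_{\cf(\lambda)}$ for which $\nacc(C_\delta)\cap E^{\delta}_{\ge\mu}\cap D\cap A_\tau$ is cofinal in $\delta$ for every $\mu\in\reg(\delta)$. I would then pick $\beta\in Y$ above $\delta$ and exploit the coherence of minimal walks: for a tail of $\alpha<\delta$ the walk from $\beta$ to $\alpha$ agrees with the walk from $\beta$ to $\delta$ until it reaches $\delta$, so there is $\rho<\delta$ with $\delta$ on the trace of $(\alpha,\beta)$ whenever $\rho<\alpha<\delta$. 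Choosing an anchor $\gamma\in\nacc(C_\delta)\cap E^{\delta}_{\ge\mu}\cap D\cap A_\tau$ with $\gamma>\rho$ and $\mu$ large, and then choosing $\alpha\in Y$ inside the nonempty interval $(\max\{\rho,\sup(C_\delta\cap\gamma)\},\gamma)$ (nonempty because $\gamma\in\nacc(C_\delta)\cap\acc^+(Y)$), forces the step from $\delta$ to be exactly $\gamma$, whence $c(\alpha,\beta)=\tau$.

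The main obstacle is precisely this last steering: guaranteeing that the recorded anchor is the prescribed $\gamma\in A_\tau$ and is not preempted by another step of the walk hitting $\bigcup_{\tau'<\theta}A_{\tau'}$, and that $\delta$ really is the last $E^{\lambda^+}_{\cf(\lambda)}$-point of the trace. This is where the high-cofinality clause $E^{\delta}_{\ge\mu}$ in the ideal earns its keep: taking $\mu$ above the order types of the finitely many $C$-sets used after $\delta$ creates enough clearance below $\gamma$ that the tail of the walk from $\gamma$ to $\alpha$ stays in a controlled region, and a pressing-down/pigeonhole argument (again using $\otp(C_\delta)<\lambda$) rules out interfering anchors. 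Making this interference analysis airtight --- rather than the bookkeeping of the walk itself --- is the crux of the proof.
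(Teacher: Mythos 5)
The paper offers no proof of this Fact---it is imported verbatim from Eisworth's paper---so your attempt has to be judged on its own terms. Your machinery is of the right kind: extend $\vec C$ to a full $C$-sequence, walk from $\beta$ down to $\alpha$, read the color off the block $A_\tau$ containing the step $\gamma=\min(C_\delta\setminus(\alpha+1))$ taken at a designated trace point $\delta\in E^{\lambda^+}_{\cf(\lambda)}$, and steer that one step by choosing $\alpha\in Y\cap(\sup(C_\delta\cap\gamma),\gamma)$, which is legitimate since $\gamma\in\nacc(C_\delta)$ and $\sup(Y\cap\gamma)=\gamma$. The genuine gap sits exactly where you park ``the crux'': the selection rule ``$\delta$ is the \emph{last} trace point in $E^{\lambda^+}_{\cf(\lambda)}$'' is not stable under anything you control. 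Once the walk steps from $\delta$ to $\gamma$, it continues from $\gamma$ down to $\alpha$ along $C$-sets over which you have no influence, and nothing prevents a later trace point $\delta'\in(\alpha,\gamma)$ from lying in $E^{\lambda^+}_{\cf(\lambda)}$, in which case the color is read off $\min(C_{\delta'}\setminus(\alpha+1))$ and is arbitrary. Neither of your proposed repairs closes this. Taking $\mu$ large bounds $\cf(\gamma)$ from below but says nothing about the cofinalities of the subsequent trace points between $\alpha$ and $\gamma$; and ``taking $\mu$ above the order types of the finitely many $C$-sets used after $\delta$'' is circular, since which $C$-sets occur after $\delta$ depends on $\alpha$, which must be chosen \emph{after} $\mu$, $\delta$ and $\gamma$ have been fixed. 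The unnamed ``pressing-down/pigeonhole argument'' is precisely the missing proof, and it is not clear that any such argument exists for this selection rule: for a fixed $\gamma$, \emph{every} admissible $\alpha\in Y$ below $\gamma$ may produce an interfering $\delta'$.

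This obstruction is the reason the published argument does not identify the relevant step by a cofinality test on the trace. Eisworth (following the Eisworth--Shelah coloring theorems for successors of singulars) couples the club-guessing sequence with a scale $\langle f_\beta\mid\beta<\lambda^+\rangle$ in a product $\prod_{i<\cf(\lambda)}\lambda_i$, and the clause $E^{\delta}_{\ge\mu}$ in Definition~\ref{defn53} is spent on making the anchor $\gamma$ recoverable from the pair $(\alpha,\beta)$ via the scale (roughly, from the coordinate at which the relevant functions separate), not on clearing the tail of the walk. If you want a walks-only proof, you need a selection rule for the designated step that is provably invariant under the final choice of $\alpha$ from $Y$; ``last trace point of cofinality $\cf(\lambda)$'' is not one, and repairing your argument amounts to finding one.
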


\begin{cor} Suppose that $\lambda$ is a singular cardinal,
and $\vec C=\langle C_\delta\mid \delta \in E^{\lambda^+}_{\cf(\lambda)}\rangle$
is a club guessing $C$-sequence such that $\otp(C_\delta)=\lambda$ for all $\delta\in E^{\lambda^+}_{\cf(\lambda)}$.
Then $\lambda^+ \nrightarrow [\lambda^+]^2_{\lambda}$ holds.
\end{cor}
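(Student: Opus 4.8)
The plan is to derive this as a corollary of Fact~\ref{fact54}. The obstruction to applying that fact verbatim is that it requires $\otp(C_\delta)<\lambda$, whereas here $\otp(C_\delta)=\lambda$; moreover Fact~\ref{fact54} needs $\theta$-many pairwise disjoint $J(\vec C)$-positive sets, and we are aiming at $\theta=\lambda$. So the real task splits into two: manufacture from $\vec C$ a derived $C$-sequence $\vec C'=\langle C'_\delta\mid\delta\in E^{\lambda^+}_{\cf(\lambda)}\rangle$ with $\otp(C'_\delta)<\lambda$, and exhibit $\lambda$-many pairwise disjoint $J(\vec C')$-positive sets. Once both are in hand, Fact~\ref{fact54} with $\theta=\lambda$ yields $\lambda^+\nrightarrow[\lambda^+]^2_{\lambda}$.

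For the first task, I would exploit the hypothesis $\otp(C_\delta)=\lambda$ through the following observation about cofinalities. Writing $c_\delta$ for the increasing enumeration of $C_\delta$, every accumulation point $c_\delta(\gamma)$ with $\gamma<\lambda$ a limit satisfies $\cf(c_\delta(\gamma))=\cf(\gamma)$; since for each $\mu\in\reg(\lambda)$ the set $\{\gamma<\lambda\mid\cf(\gamma)=\mu\}$ is cofinal in $\lambda$, the set $\acc(C_\delta)$ contains, cofinally in $\delta$, points of cofinality $\mu$ for \emph{every} $\mu\in\reg(\lambda)$. I would then thin each $C_\delta$ to a closed cofinal $C'_\delta\s C_\delta$ of order type $\cf(\lambda)\cdot\cf(\lambda)<\lambda$, choosing its members so that $\nacc(C'_\delta)$ contains, cofinally in $\delta$, points of cofinality $\ge\mu$ for every $\mu\in\reg(\lambda)$ — that is, so that $\vec C'$ is uninhibited. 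Club-guessing is inherited for free, since any $\delta$ with $C_\delta\s D$ automatically has $C'_\delta\s C_\delta\s D$. Consequently $J(\vec C')$ is a proper ideal and $\lambda^+\in J(\vec C')^+$, and the uninhibitedness is exactly what makes the graded-cofinality clauses $\nacc(C'_\delta)\cap E^{\delta}_{\ge\mu}$ in Definition~\ref{defn53} nontrivial.

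The second task is where I expect the essential difficulty. The target is a partition $\lambda^+=\bigsqcup_{\eta<\lambda}A_\eta$ with each $A_\eta$ being $J(\vec C')$-positive, the $\lambda$ colors being produced out of the order type $\lambda$ of the original ladders. The clean strategy is to split the domain $E^{\lambda^+}_{\cf(\lambda)}$ into $\lambda$-many stationary pieces $\langle S_\eta\mid\eta<\lambda\rangle$ on each of which $\vec C'$ still guesses clubs, and to arrange $\nacc(C'_\delta)\s A_\eta$ whenever $\delta\in S_\eta$, so that positivity of $A_\eta$ follows by guessing a given club $D$ inside $S_\eta$. The main obstacle is precisely to secure this in $\zfc$: restrictions of a club-guessing sequence to a stationary subset of its domain need not guess, so the splitting must be performed hand-in-hand with the thinning, using the freedom afforded by $\otp(C_\delta)=\lambda$ to retain guessing on each piece; simultaneously one must keep the sets $A_\eta$ pairwise disjoint, which requires bookkeeping the (size-$<\lambda$) sets $\nacc(C'_\delta)$ into disjoint color classes even though distinct ladders $C_\delta,C_{\delta'}$ may share ordinals. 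I regard reconciling \emph{survival of club-guessing under the $\lambda$-fold splitting} with \emph{global disjointness of the chosen non-accumulation points} as the crux of the argument, and the place where the hypothesis $\otp(C_\delta)=\lambda$ must be used most delicately.
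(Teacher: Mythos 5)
Your reduction to Fact~\ref{fact54} is the right frame, and your first task (thinning each $C_\delta$ to an uninhibited subclub of order type $<\lambda$, noting that $\otp(C_\delta)=\lambda$ forces $\acc(C_\delta)$ to contain points of every cofinality in $\reg(\lambda)$ cofinally in $\delta$) matches what the paper does. But the proposal stops exactly at the two points that constitute the actual proof, and explicitly labels them as unresolved: (a) that $E^{\lambda^+}_{\cf(\lambda)}$ can be partitioned into $\lambda$-many pieces on each of which $\vec C$ still guesses clubs, and (b) that one can produce $\lambda$-many \emph{pairwise disjoint} $J$-positive sets. Declaring these ``the crux'' without supplying an argument or a citation is a genuine gap, not a proof.

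For (a), the paper invokes a general partition theorem for club guessing (\cite[\S4.4]{paper46}); this is a nontrivial black box, not something one improvises, and your worry that restrictions of club-guessing sequences need not guess is precisely why you cannot leave it to ``hand-in-hand'' bookkeeping. For (b), your proposed mechanism --- forcing the sets $\nacc(C'_\delta)$ for $\delta$ in different pieces into globally disjoint color classes --- is the wrong move and is likely unachievable as stated, since distinct ladders overlap arbitrarily. The paper sidesteps this entirely: by Corollary~\ref{cor24}(1) one may assume $\lambda=\aleph_\lambda$, so $\reg(\lambda)$ splits into $\lambda$-many pairwise disjoint cofinal sets $K_\tau$ of order type $\cf(\lambda)$, and one takes $B_\tau:=\{\beta<\lambda^+\mid\cf(\beta)\in K_\tau\}$. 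These are disjoint \emph{by definition of cofinality}, with no bookkeeping; the thinning of $C_\delta$ for $\delta\in S_\tau$ is then chosen so that its non-accumulation points have cofinalities in $K_\tau$ (here is where $\otp(C_\delta)=\lambda$ is really used, via $\cf(c_\delta(\gamma))=\cf(\gamma)$), which makes $B_\tau$ positive for the restricted sequence. Without the reduction to $\lambda=\aleph_\lambda$ and the cofinality trick, your outline does not close.
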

\begin{proof}
Using a general partition theorem for club guessing \cite[\S4.4]{paper46},
we may fix a partition $\langle S_\tau\mid \tau<\lambda\rangle$ of $E^{\lambda^+}_{\cf(\lambda)}$ such that $\vec C\restriction S_\tau$ guesses clubs for each $\tau < \lambda$.
By Corollary~\ref{cor24}(1), we may assume that $\lambda=\aleph_\lambda$,
hence, we may find a pairwise  disjoint sequence $\vec K=\langle K_\tau\mid \tau<\lambda\rangle$
of cofinal subsets of $\reg(\lambda)$ of order-type $\cf(\lambda)$.
For every $\tau<\lambda$,  set $B_\tau:=\{ \beta<\lambda^+\mid \cf(\beta)\in K_\tau\}$,
and note that $\vec B=\langle B_\tau\mid \tau<\lambda\rangle$ is a sequence of pairwise disjoint stationary subsets of $\lambda^+$.

Next, for all $\tau<\lambda$ and $\delta\in S_\tau$, let $D_\delta$ be the closure below $\delta$ of the following set $$\{\gamma\in C_\delta\mid \otp(C_\delta\cap\gamma)\in K_\tau\},$$
so that $D_\delta$ is a subclub of $C_\delta$ of order-type $\cf(\lambda)$. In particular, $\vec D:=\langle D_\delta\mid\delta\in E^{\lambda^+}_{\cf(\lambda)}\rangle$
is an uninhibited club-guessing $C$-sequence such that $\otp(D_\delta)<\lambda$ for all $\delta\in E^{\lambda^+}_{\cf(\lambda)}$.
A moment's reflection makes it clear that, for every $\tau<\lambda$, $B_\tau$ is $J(\vec D\restriction S_\tau)$-positive.
In particular, $\vec B$ consists of $\lambda$-many pairwise disjoint $J(\vec D)$-positive sets.
By Fact~\ref{fact54}, then, $\lambda^+\nrightarrow[\lambda^+]^2_\lambda$ holds.
\end{proof}

The preceding motivates the following stronger form of Problem~\ref{p10}:

\begin{q}\label{p14} Suppose that $\lambda$ is a singular cardinal. Is there a club guessing $C$-sequence $\vec C=\langle C_\delta\mid \delta\in E^{\lambda^+}_{\cf(\lambda)}\rangle$  such that $\otp(C_\delta)=\lambda$  for all $\delta\in E^{\lambda^+}_{\cf(\lambda)}$?
\end{q}

To compare, a negative answer is known for $\lambda$ regular,
as Abraham and Shelah proved \cite{AbSh:182}
it is consistent  for a regular cardinal $\lambda$ to have $\lambda^{++}$ many  clubs in $\lambda^+$ such that the intersection of any $\lambda^+$ many of them has size $<\lambda$.

Problem~\ref{p14} bears similarity to questions concerning the failure of diamond at successor of singulars;
results in this direction may be found in \cite{Sh:186,Sh:667,paper08}.

An affirmative answer to Problem~\ref{p14} (or Problem~\ref{p10}) under the additional assumption of $\square^*_\lambda$ is of interest, as well.

We conclude this paper by reiterating Question~2 from \cite{paper11}:

\begin{q} Suppose that $\lambda$ is a singular cardinal. Assuming $\square_\lambda$ holds,
may it be witnessed by $C$-sequence $\vec C=\langle C_\delta\mid\delta<\lambda^+\rangle$
with the property that for every club $D\s\lambda^+$, there exists some $\delta\in\acc(\lambda^+)$
such that $\otp(C_\delta\cap D)=\lambda$?
\end{q}

By the main result of \cite{paper19}, an affirmative answer to the preceding holds assuming $2^\lambda=\lambda^+$.

\section{Acknowledgments}
This survey is an expanded version of a talk given by the author at the ``Perspectives on Set Theory'' conference, November 2023.
We thank the organizers for the invitation to this unusual conference.

The author was supported by the European Research Council (grant agreement ERC-2018-StG 802756)
and by the Israel Science Foundation (grant agreement 665/20).

\newcommand{\etalchar}[1]{$^{#1}$}

\end{document}